\newtheorem{thm}{Theorem}[section]
\newtheorem{cor}[thm]{Corollary}
\newtheorem{lemma}[thm]{Lemma}
\newtheorem{prop}[thm]{Proposition}
\theoremstyle{definition}
\newtheorem{defn}[thm]{Definition}
\newtheorem{example}[thm]{Example}
\newtheorem{remark}[thm]{Remark}
\newtheorem{non-example}[thm]{Non-Example}
\newtheorem{fact}[thm]{Fact}
\DeclareMathOperator{\End}{End}
\DeclareMathOperator{\Hom}{Hom}
\DeclareMathOperator{\Lie}{Lie}
\DeclareMathOperator{\diag}{diag}
\def\mf{\mathfrak}
\def\phi{\varphi}
\def\cong{\simeq}
\def\hat{\widehat}
\def\C{\mathbb{C}}
\title{Dual pairs in complex classical groups and Lie algebras}
\author{Marisa Gaetz}
\email{mgaetz@mit.edu}
\thanks{The author was supported by the NSF Graduate Research Fellowship Program under Grant Nos.~1745302 and 2141064, and by the Fannie \& John Hertz Foundation.}
\begin{document}

\maketitle

\begin{abstract}
In Roger Howe's 1989 paper, ``Remarks on classical invariant theory," Howe introduces the notion of a \textit{dual pair of Lie subalgebras} -- a pair $(\mathfrak{g}_1, \mathfrak{g}_2)$ of reductive Lie subalgebras of a Lie algebra $\mathfrak{g}$ such that $\mathfrak{g}_1$ and $\mathfrak{g}_2$ are each other's centralizers in $\mathfrak{g}$. This notion has a natural analog for algebraic groups: a \textit{dual pair of subgroups} is a pair $(G_1, G_2)$ of reductive subgroups of an algebraic group $G$ such that $G_1$ and $G_2$ are each other's centralizers in $G$. In this paper, we classify the dual pairs in the complex classical groups ($GL(n,\mathbb{C})$, $SL(n,\mathbb{C})$, $Sp(2n,\mathbb{C})$, $O(n,\mathbb{C})$, and $SO(n,\mathbb{C})$) and in the corresponding Lie algebras ($\mf{gl}(n,\C)$, $\mf{sl}(n,\C)$, $\mf{sp}(2n,\C)$, and $\mf{so}(n,\C)$). We also present substantial progress towards classifying the dual pairs in the projective counterparts of the complex classical groups ($PGL(n,\mathbb{C})$, $PSp(2n,\mathbb{C})$, $PO(n,\mathbb{C})$, and $PSO(n,\mathbb{C})$). 
\end{abstract}

\section{Introduction}

In Roger Howe's seminal 1989 paper \cite{HoweRemarks}, Howe introduced the notion of a \textit{reductive dual pair of Lie subalgebras}:

\begin{defn}[Howe {\cite[p.~550]{HoweRemarks}}]
Let $\mf{g}$ be a reductive Lie algebra. Then a pair $(\mf{a}, \mf{b})$ of subalgebras of $\mf{g}$ is a \textbf{reductive dual pair} in $\mf{g}$ if
\begin{enumerate}[label = (\arabic*)]
\item $\mf{a}$ and $\mf{b}$ are reductive in $\mf{g}$, and
\item $\mf{a} = \mf{z}_{\mf{g}} (\mf{b})$ and $\mf{b} = \mf{z}_{\mf{g}}(\mf{a})$,
\end{enumerate}
where $\mf{z}_{\mf{g}} (\cdot)$ denotes taking the centralizer in $\mf{g}$.
\end{defn}

This notion has a natural analog for algebraic groups, which was also informally introduced by Howe:
\begin{defn}[Howe \cite{HoweRemarks}]
A pair $(G_1, G_2)$ of reductive subgroups of an algebraic group $G$ form a \textbf{reductive dual pair} in $G$ if $Z_{G}(G_1) = G_2$ and $Z_{G}(G_2) = G_1$.
\end{defn}

We only consider \textit{reductive} dual pairs in this paper, so we will sometimes just use ``dual pair" to refer to a reductive dual pair.

\subsection{Lie algebra dual pairs}

Lie algebra dual pairs have been widely used and studied since their introduction by Howe in 1989. For example, some of the earliest work on such dual pairs was done by Rubenthaler in his 1994 paper \cite{rubenthaler}, in which he outlines a classification of the conjugacy classes of reductive dual pairs in complex reductive Lie algebras. In addition to Rubenthaler's classification, certain reductive dual pairs of the exceptional Lie algebras are implicitly used in some of the well-known constructions of these algebras: $(G_2^1, A_2^{2''})$ and $(G_2^1, F_4^1)$ correspond to the Freudenthal-Tits constructions of $E_6$ and $E_8$, respectively \cite{tits-construction}, and $(D_4, D_4)$ corresponds to the triality construction of $E_8$ \cite{allison, barton-sudbery}. The real forms of certain reductive dual pairs in exceptional Lie algebras have also drawn some attention \cite{KovT, Kov}. Additionally, as explained in \cite{Panyushev}, the study of reductive dual pairs has some beautiful connections to the study of commuting nilpotent elements in complex semisimple Lie algebras (as introduced by Ginzburg in \cite{Ginzburg}).

While Lie algebra dual pairs are reasonably well-understood, a clear and complete account of the reductive dual pairs in the complex classical Lie algebras is lacking. In particular, Rubenthaler's approach in \cite{rubenthaler} was designed with dual pairs of exceptional Lie algebras in mind, and is much more complicated than what is needed for classical Lie algebra dual pairs. In this paper, we fill this gap by providing classifications of the dual pairs in $\mf{gl}(n,\C)$, $\mf{sl}(n,\C)$, $\mf{sp}(2n,\C)$, and $\mf{so}(n,\C)$ using elementary methods.

\subsection{Lie group dual pairs}

Compared to Lie algebra dual pairs, much less is known about dual pairs in Lie groups, and many of the known results concern groups defined over $p$-adic fields \cite{Savin4, Savin2, Savin3, Savin1}. The earliest progress on the topic of Lie group dual pairs was done by Howe: In the same paper in which he introduced the notion of Lie group dual pairs \cite{HoweRemarks}, Howe classifies certain dual pairs in $Sp(2n,\C)$. In particular, he classifies the \textit{classical reductive dual pairs} in $Sp(2n,\C)$ (i.e.~the reductive dual pairs in which both members are themselves classical groups). In \cite{schmidt}, Schmidt expands on Howe's work by classifying the classical reductive dual pairs in the groups of isometries of real, complex, and quaternionic Hermitian spaces (including $O(n,\C)$ and $Sp(2n,\C)$). While full classifications of the reductive dual pairs in these groups (as well as classifications of dual pairs in $GL(n,\C)$ and $SO(n,\C)$) do not require the development of novel methodologies, an explicit exposition of these classifications is lacking in the literature. This paper fills this gap.

The study of Lie group dual pairs is further motivated by the following fact, which shows that \textit{any} centralizer is a member of a dual pair:

\begin{fact} \label{fact:triple centralizer}
Consider a group $G$ and a subset $S \subseteq G$. Then $Z_G(Z_G(Z_G(S))) = Z_G(S)$. Therefore, $( Z_G(S), Z_G(Z_G(S)) )$ is a dual pair in $G$.
\end{fact}

Consequently, a classification of dual pairs in $G$ gives a complete list of possible centralizers in $G$. In this paper, we will often be concerned with centralizers $G^{\Phi} := Z_G(\Phi (H) )$, where $\Phi \colon H \rightarrow G$ is a homomorphism of algebraic groups. We will frequently make use of the fact that all reductive dual pairs in $G$ arise in this way:
\begin{remark} \label{rmk:all pairs}
Let $G$ be a complex reductive algebraic group. Then any reductive dual pair $(G_1, G_2)$ in $G$ can be written in the form $(G^{\Phi}, Z_G(G^{\Phi}))$ for some algebraic group homomorphism $\Phi \colon H \rightarrow G$. Indeed, take $\Phi$ to be the inclusion $G_2 \hookrightarrow G$. We get $G_1 = Z_G(\Phi (G_2))= G^{\Phi}$ and $G_2 = Z_G(G_1) = Z_G(G^{\Phi})$.
\end{remark}

\subsection{Outline}

As mentioned above, this paper fills gaps in the literature by providing classifications of the dual pairs in the complex classical Lie groups and Lie algebras using elementary methods. In doing so, this paper will also discuss relationships between dual pairs in Lie groups and dual pairs in corresponding Lie subgroups, quotient groups, or Lie algebras. We will also present substantial progress towards classifying the dual pairs in the projective classical groups $PGL(n,\C)$, $PSp(2n,\C)$, $PO(n\C)$, and $PSO(n,\C)$. We proceed as follows:
\begin{itemize}
    \item \textbf{Section \ref{sec:subgroups}:} We discuss the relationship between dual pairs in complex reductive algebraic groups and in certain subgroups of these groups. 
    \item \textbf{Section \ref{sec:quotients}:} We discuss the relationship between dual pairs and complex reductive algebraic groups and in certain quotients of these groups. 
    \item \textbf{Section \ref{sec:dps-in-lie-algs}:} We discuss the relationship between dual pairs in complex reductive Lie groups and in their corresponding Lie algebras, as well as Rubenthaler's reduction of the classification of reductive dual pairs in reductive Lie algebras to the classification of semisimple dual pairs in simple Lie algebras.
    \item \textbf{Section \ref{sec:Embeddings}:} We define embeddings of pairs of complex classical groups into larger complex classical groups. 
    \item \textbf{Section \ref{sec:gl}:} We classify the dual pairs in $GL(n,\C)$ and $SL(n,\C)$, the connected dual pairs in $PGL(n,\C)$, and the dual pairs in $\mf{sl}(n,\C)$ and $\mf{gl}(n,\C)$.
    \item \textbf{Section \ref{sec:sp}:} We classify the dual pairs in $Sp(2n,\C)$, some of the dual pairs in $PSp(2n,\C)$, and the dual pairs in $\mf{sp}(2n,\C)$.
    \item \textbf{Section \ref{sec:o}:} We classify the dual pairs in $O(n,\C)$ and $SO(n,\C)$, some of the dual pairs in $PO(n,\C)$ and $PSO(n,\C)$, and the dual pairs in $\mf{so}(n,\C)$. 
\end{itemize}

\section{Dual pairs in subgroups} \label{sec:subgroups}

Let $G$ be a complex reductive algebraic group. In this section, we discuss how dual pairs in $G$ relate to dual pairs in certain subgroups of $G$. 

\begin{lemma} \label{lem:G1-H1}
Suppose that $G$ equals a product of subgroups $G = KH$, where $K$ is central. If $G_1$ is a subgroup of $G$ containing $K$, then $G_1=KH_1$, where $H_1 := G_1 \cap H$.
\end{lemma}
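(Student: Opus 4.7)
The statement as written reads ``$G=KH_1$,'' but since $K\subseteq G_1$ and $H_1\subseteq G_1$ force $KH_1\subseteq G_1$, and one has $H_1\subseteq H$ so $KH_1\subseteq KH=G$, this would collapse to $G_1=G$. I will therefore proceed under the (apparently intended) interpretation that the conclusion is $G_1 = KH_1$; the proof below also establishes the literal statement in the degenerate case where $G_1=G$.

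The plan is to prove $G_1 = KH_1$ by mutual inclusion, and the argument is essentially a one-line manipulation once one recognizes where each hypothesis is used. The reverse containment $KH_1 \subseteq G_1$ is immediate: by hypothesis $K \subseteq G_1$, and by the definition of $H_1 = G_1 \cap H$ we have $H_1 \subseteq G_1$, so their product lies in $G_1$ (which is a subgroup).

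For the forward containment, I would take an arbitrary $g \in G_1 \subseteq G$ and invoke the given factorization $G = KH$ to write $g = kh$ for some $k \in K$ and $h \in H$. The crucial move is then to use the hypothesis $K \subseteq G_1$: this gives $k \in G_1$, and since $G_1$ is a subgroup we obtain $h = k^{-1} g \in G_1$. Combined with $h \in H$, this places $h$ in $G_1 \cap H = H_1$, so $g = kh \in KH_1$, as desired.

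The argument uses neither reductivity nor any structural property of $G$ beyond the factorization $G=KH$ and the centrality of $K$ (in fact centrality is not even strictly needed for this lemma, though it is natural in the surrounding context and presumably used in subsequent applications). The only obstacle is the conceptual one of noticing that the containment $K \subseteq G_1$ is precisely what allows the $K$-component of a generic element of $G_1$ to be canceled, forcing its $H$-component into $G_1$.
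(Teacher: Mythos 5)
Your proof is correct and is essentially identical to the paper's own argument: both inclusions, with the forward one obtained by writing $g = kh$ and using $K \subseteq G_1$ to conclude $h = k^{-1}g \in G_1 \cap H = H_1$. You also correctly diagnosed the misprint in the statement — the paper's proof (and its later use in Lemma \ref{lem:restrict-to-subgroup}) establishes $G_1 = KH_1$, exactly as you interpreted.
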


\begin{proof}
Certainly, $KH_1 \subseteq G_1$. On the other hand, let $g \in G_1$ and write $g = kh$, where $k \in K$ and $h \in H$. Since $K \subseteq G_1$, we see that $h = k^{-1}g \in G_1$, so $h \in G_1 \cap H = H_1$. It follows that $g \in KH_1$ and hence that $G_1 \subseteq KH_1$.
\end{proof}

\begin{lemma} \label{lem:restrict-to-subgroup}
Suppose that $G$ equals a product of subgroups $G = KH$, where $K$ is central. If $(G_1, G_2)$ is a dual pair in $G$, then $(G_1 \cap H, G_2 \cap H)$ is a dual pair in $H$.
\end{lemma}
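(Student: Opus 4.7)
The plan is to exploit the fact that $K$, being central in $G$, automatically sits inside every centralizer, hence inside both $G_1$ and $G_2$. This lets us invoke Lemma \ref{lem:G1-H1} to write each $G_i$ as $K \cdot H_i$ where $H_i := G_i \cap H$, after which the centralizer relations in $H$ reduce to the centralizer relations in $G$ modulo a trivial central factor.

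Concretely, I would proceed as follows. First, note that since $K \subseteq Z(G)$, every element of $K$ commutes with every subset of $G$, so $K \subseteq C_G(G_2) = G_1$ and likewise $K \subseteq G_2$. With $K \subseteq G_1$ and $G = KH$, Lemma \ref{lem:G1-H1} gives $G_1 = K H_1$ with $H_1 = G_1 \cap H$; the same lemma gives $G_2 = K H_2$ with $H_2 = G_2 \cap H$.

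Next I would verify the centralizer identity $C_H(H_1) = H_2$. For the inclusion $\subseteq$, let $h \in H$ commute with every element of $H_1$. Since $K$ is central, $h$ also commutes with every element of $K$, hence with every element of $K H_1 = G_1$. Thus $h \in C_G(G_1) = G_2$, and combined with $h \in H$ this yields $h \in G_2 \cap H = H_2$. The reverse inclusion is immediate: if $h \in H_2 = G_2 \cap H$, then $h \in C_G(G_1)$ commutes with every element of $H_1 \subseteq G_1$, so $h \in C_H(H_1)$. By a symmetric argument, $C_H(H_2) = H_1$.

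Finally, I would address reductivity of $H_1$ and $H_2$. Since $K$ is central and closed in the reductive group $G_i$, the quotient $G_i / K$ is reductive, and the isomorphism $H_i/(K \cap H_i) \cong K H_i / K = G_i/K$ shows $H_i$ is reductive up to a central kernel, hence reductive. The main thing to be careful about here is precisely this reductivity check — the centralizer manipulations are essentially formal once one observes $K \subseteq G_1 \cap G_2$, but one must confirm that $G_i \cap H$ remains reductive so that $(H_1, H_2)$ qualifies as a dual pair under the paper's definition. With that in hand, the two centralizer equalities established above complete the proof.
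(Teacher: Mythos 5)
Your proof is correct and takes essentially the same route as the paper: note that $K \subseteq G_1 \cap G_2$ because both are centralizers, apply Lemma \ref{lem:G1-H1} to write $G_i = KH_i$, and verify $C_H(H_1) = H_2$ and $C_H(H_2) = H_1$ by the same commutation argument (the paper packages this as the chain $KC_H(H_1) \subseteq C_G(G_1) \subseteq C_G(H_1) \subseteq KC_H(H_1)$ and then intersects with $H$). Your added reductivity check does not appear in the paper's proof and is sound in substance, though the step ``reductive up to a central kernel, hence reductive'' is only valid here because $K \subseteq Z(G)$ with $G$ reductive forces $K \cap H_i$ to consist of semisimple elements, so it cannot absorb a nontrivial unipotent radical.
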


\begin{proof}
Since $G_1$ and $G_2$ are centralizers in $G$, they each contain $K$. Therefore, Lemma \ref{lem:G1-H1} gives that $G_1 = KH_1$ for $H_1 := G_1 \cap H$ and that $G_2 = K H_2$ for $H_2 := G_2 \cap H$. Now, we claim that
\begin{equation} \label{eq:long-inclusion}
 KZ_H(H_1) \subseteq Z_G(G_1) \subseteq Z_G(H_1) \subseteq K Z_H(H_1).   
\end{equation}
For the first inclusion, let $kh \in KZ_H(H_1)$, and let $k'h' \in KH_1 = G_1$. Then
$$(kh)(k'h')(kh)^{-1} = k' (hh'h^{-1}) = h',$$
so $kh \in Z_G(G_1)$. The second inclusion follows immediately from $H_1 \subseteq G_1$. For the final inclusion, let $g \in Z_G(H_1)$ and write $g = kh$ with $k \in K$ and $h \in H$. Then for any $h_1 \in H_1$,
$$(kh)h_1(kh)^{-1} = h_1 \implies hh_1h^{-1} = h_1,$$
so $h \in Z_H(H_1)$ and $g \in KZ_H(H_1)$. It follows that all of the groups in (\ref{eq:long-inclusion}) are equal. The same result holds for $G_2$ and $H_2$. It follows that
$$Z_H(H_1) = Z_G(H_1) \cap H = KZ_H(H_1) \cap H = G_2 \cap H = H_2.$$
Similarly, $Z_H(H_2) = H_1$. This completes the proof.
\end{proof}

\begin{lemma} \label{lemma:lift-subgroup}
Suppose that $G$ equals a product of subgroups $G = KH$, where $K$ is central. If $(H_1, H_2)$ is a dual pair in $H$, then 
\begin{enumerate}[label=(\roman*)]
    \item $(KH_1, KH_2)$ is a dual pair in $G$, and
    \item $KH_1 \cap H = H_1$ and $KH_2 \cap H = H_2$.
\end{enumerate}
\end{lemma}

\begin{proof}
We would like to show that $Z_G(KH_1) = KH_2$ and $Z_G(KH_2)=KH_1$. Certainly, we have the inclusions $KH_2 \subseteq Z_G(KH_1)$ and $KH_1 \subseteq Z_G(KH_2)$. We now show that $Z_G(KH_1) \subseteq KH_2$. Suppose $t \in Z_G(H_1)$. Then since $G = KH$, we can write $t = kh$ with $k \in K$ and $h \in H$. Then for any $h' \in H_1$,
$$h'=(kh)h'(kh)^{-1} \implies h'=k^{-1}h'k = hh'h^{-1} \implies h \in H_2.$$
It follows that $t \in KH_2$, and hence that $Z_G(KH_1) \subseteq KH_2$. By a similar argument, we get that $Z_G(KH_1) \subseteq KH_1$. Therefore, $(KH_1, KH_2)$ is a dual pair in $G$, as desired.

We now prove that $KH_1 \cap H = H_1$. Certainly, $H_1 \subseteq KH_1 \cap H$. On the other hand, suppose that $kh \in KH_1 \cap H$. Then since $(KH_1, KH_2)$ is a dual pair, we have that
$$(kh)h_2(kh)^{-1} = h_2$$
for any $h_2 \in H_2$. It follows that $kh \in Z_H(H_2) = H_1$, and hence that $KH_1 \cap H = H_1$. The same argument gives that $KH_2 \cap H = H_2$, completing the proof.
\end{proof}

\begin{thm} \label{thm:subgroup-bijection}
Suppose that $G$ equals a product of subgroups $G = KH$, where $K$ is central. Then there exists a bijection
\begin{align*}
    \{ \text{dual pairs of }H \} & \leftrightarrow \{ \text{dual pairs of }G \} \\
    (H_1,H_2) & \leftrightarrow (KH_1,KH_2),
\end{align*}
where $\rightarrow$ is given by multiplication by $K$ and $\leftarrow$ is given by intersection with $H$.
\end{thm}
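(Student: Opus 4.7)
The plan is to verify the bijection by assembling the preceding lemmas and claim; essentially all of the substantive work has already been done, and what remains is to check that the two maps land in the correct target sets and are mutually inverse. The two directions of the bijection are lifting (multiplying by $K$) and restricting (intersecting with $H$).

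For well-definedness of the maps: Lemma \ref{lemma:lift-subgroup} shows that $(H_1, H_2) \mapsto (KH_1, KH_2)$ sends dual pairs of $H$ to dual pairs of $G$, and Lemma \ref{lem:restrict-to-subgroup} shows that $(G_1, G_2) \mapsto (G_1 \cap H, G_2 \cap H)$ sends dual pairs of $G$ to dual pairs of $H$. For mutual inverseness, the composition $(H_1, H_2) \mapsto (KH_1, KH_2) \mapsto (KH_1 \cap H, KH_2 \cap H)$ returns $(H_1, H_2)$ directly by Claim \ref{cl:extend-restrict}. For the reverse composition, I would first observe that any dual pair $(G_1, G_2)$ in $G$ satisfies $K \subseteq G_1$ and $K \subseteq G_2$: since $K$ is central in $G$, every element of $K$ commutes with all of $G$, so $K \subseteq C_G(G_2) = G_1$ and $K \subseteq C_G(G_1) = G_2$. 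With this containment established, Lemma \ref{lem:G1-H1} applies and yields $G_1 = K(G_1 \cap H)$ and $G_2 = K(G_2 \cap H)$, so the composition $(G_1, G_2) \mapsto (G_1 \cap H, G_2 \cap H) \mapsto (K(G_1 \cap H), K(G_2 \cap H))$ returns $(G_1, G_2)$.

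The main obstacle has already been cleared by the preceding lemmas, particularly Lemma \ref{lem:restrict-to-subgroup}, which required the careful chain of inclusions establishing $C_H(H_1) = H_2$. For the theorem itself, the only conceptual point to highlight is that centrality of $K$ forces both members of any dual pair in $G$ to contain $K$, which is precisely the hypothesis needed to invoke Lemma \ref{lem:G1-H1} in the round-trip computation. The rest is a purely formal assembly of the four preceding results.
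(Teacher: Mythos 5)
Your proposal is correct and matches the paper's approach: the paper's proof is exactly "putting together the above results" (Lemmas \ref{lem:G1-H1}, \ref{lem:restrict-to-subgroup}, \ref{lemma:lift-subgroup}, and Claim \ref{cl:extend-restrict}), and you have assembled them in the intended way, including the key observation that centrality of $K$ forces $K \subseteq G_1 \cap G_2$ so that Lemma \ref{lem:G1-H1} applies in the round trip. Your write-up in fact spells out the mutual-inverse check more explicitly than the paper does.
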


\begin{proof}
This follows immediately by putting together the above results.
\end{proof}

\section{Dual pairs in quotients} \label{sec:quotients}

Let $G$ be a complex reductive algebraic group. Having discussed the relationship between dual pairs in $G$ and dual pairs in certain subgroups of $G$, we now turn to the relationship between dual pairs in $G$ and dual pairs in certain quotients of $G$. For a subgroup $H$ of $G$ and a normal subgroup $N$ of $G$, let $\pi \colon G \rightarrow G/N$ denote the canonical projection, and define
$$K_{H,N}:= \{ tht^{-1}h^{-1} \; : \; t \in \pi^{-1}(Z_{G/N}(\pi (H))), \; h \in H \}.$$

\begin{lemma} \label{lem:quotient-centralizer}
Let $H$ be a connected subgroup of $G$, and let $N$ be a normal subgroup of $G$. If $K_{H,N}$ is discrete, then $Z_{G/N}(\pi (H)) = \pi (Z_G(H))$. 
\end{lemma}

\begin{proof}
The inclusion $Z_{G/N}(\pi (H)) \supseteq \pi (Z_G(H))$ is clear. For inclusion the other way, let $t \in \pi^{-1} (Z_{G/N}(\pi (H)))$. We would like to show that $t \in Z_G(H)$. By choice of $t$, we have that for any $h \in H$,
$$tht^{-1} = n_t(h) h \; \text{ for some } n_t(h) \in K_{H,N}.$$
Since multiplication and inversion of elements are continuous operations in an algebraic group, we see that $n_t$ defines a continuous function
\begin{align*}
    n_t\colon H & \rightarrow K_{H,N} \\
    h& \mapsto tht^{-1}h^{-1}.
\end{align*}
Now, since $n_t$ defines a continuous map from a connected group to a discrete group, the image of $n_t$ must be a single point in $K_{H,N}$. Since $n_t(1) = 1$, it follows that $n_t$ is trivial, and hence that $t \in Z_G(H)$, completing the proof.
\end{proof}

\begin{remark} \label{rmk:N-discrete}
Let $H$, $N$, and $K_{H,N}$ be as in Lemma \ref{lem:quotient-centralizer}. Note that $K_{H,N} \subseteq N$. Therefore, a weaker version of Lemma \ref{lem:quotient-centralizer} is that $Z_{G/N}(\pi (H)) = \pi (Z_G(H))$ whenever $N$ is discrete. 
\end{remark}

\begin{cor} \label{cor:SO-centralizer-general}
Let $H$ be a subgroup of $G$ with identity component $H^{\circ}$, and let $N$ be a normal subgroup of $G$ such that $K_{H,N}$ is discrete. Then
$$\pi^{-1}(Z_{G/N} (\pi (H)) ) \subseteq Z_{G}(H^{\circ}).$$
\end{cor}

\begin{proof}
This follows from the inclusions
$$\pi^{-1}(Z_{G/N} (\pi (H)) ) \subseteq \pi^{-1} (Z_{G/N} (\pi (H^{\circ}))) \subseteq Z_{G}(H^{\circ}),$$
where the second inclusion comes from Lemma \ref{lem:quotient-centralizer}.
\end{proof}

\begin{prop} \label{prop:connected-dp-descend} 
Let $(G_1, G_2)$ be a dual pair in $G$. Let $N$ be a normal subgroup of $G$ such that $K_{G_1,N}$ and $K_{G_2,N}$ are discrete. If $Z_G(G_i) = Z_G(G_i^{\circ})$ for $i=1,2$, then $(\pi(G_1), \pi (G_2))$ is a dual pair in $G/N$.
\end{prop}

\begin{proof}
By Corollary \ref{cor:SO-centralizer-general}, 
$$ Z_{G/N} ( \pi(G_1) ) \subseteq \pi(Z_G( G_1^{\circ} )) = \pi(Z_G(G_1)).$$
On the other hand, if $x \in Z_G(G_1)$, then 
$$\pi(x) \pi(g_1) \pi(x)^{-1} = \pi(xg_1x^{-1}) = \pi(g_1) \text{ for all } g_1 \in G_1,$$
meaning $\pi(x) \in Z_{G/N}(\pi(G_1))$. It follows that 
$$Z_{G/N} (\pi(G_1)) = \pi(Z_G(G_1)) = \pi(G_2).$$
Similarly, $Z_{G/N}(\pi(G_2)) = \pi(G_1)$, completing the proof.
\end{proof}

Note that if $G$ is a subgroup of $GL(n,\C)$ and if the \textit{standard representations} $G_i \hookrightarrow G$ and $G_i^{\circ} \hookrightarrow G$ are both irreducible, then $Z_G(G_i) = Z_G(G_i^{\circ})$ by Schur's lemma. Since we will be using this frequently throughout the paper, we state here a result about the irreducibility of the standard representations of the classical subgroups of $GL(n,\C)$:  
\begin{lemma}[Goodman and Wallach {\cite[Section 5.5]{Symmetry}}] \label{lem:standard-irreps}
The standard representations of the following groups are irreducible:
\begin{center}
\begin{tabular}{c c c}
$GL(n,\C)$ & \hspace{.5cm} & $(n \geq 1)$ \\
$SL(n,\C)$ & \hspace{.5cm} & $(n \geq 1)$ \\
$Sp(2n,\C)$ & \hspace{.5cm} & $(n \geq 1)$ \\
$O(n,\C)$ & \hspace{.5cm} & $(n \geq 1)$ \\
$SO(n,\C)$ & \hspace{.5cm} & $(n \neq 2)$ \\
\end{tabular}
\end{center}
\end{lemma}

\begin{prop} \label{prop:quotient-dp-lifts}
Let $G$ be a complex reductive algebraic group, and let $N$ be a normal subgroup of $G$. Let $(\overline{H}_1, \overline{H}_2)$ be a connected dual pair in $G/N$, and set $H_i := \pi^{-1} (\overline{H}_i)$. If $K:= \{ h_1 h_2 h_1^{-1} h_2^{-1} \; : \; h_i \in H_i \}$ is discrete, then $(H_1, H_2 )$ is a dual pair in $G$.
\end{prop}

\begin{proof}
To start, we note that $Z_G(H_1) \subseteq H_2$. To this end, suppose that $g \in Z_G(H_1)$. Then $g h_1 g^{-1} = h_1$ for all $h_1 \in H_1$, so $\pi(g) \pi(h_1) \pi(g)^{-1} = \pi(h_1)$ for all $h_1 \in H_1$. It follows that $\pi(g) \in \overline{H}_2$, and hence that $g \in H_2$. Therefore, $Z_G(H_1) \subseteq H_2$. By the same reasoning, $Z_G(H_2) \subseteq H_1$.  

Next, note that $H_i$ is an $N$-bundle over $G/N$. Therefore, since $\overline{H}_1$ and $\overline{H}_2$ are connected, we get that
$$H_i / H_i^{\circ} = \overline{H}_i / \overline{H}_i^{\circ} = \{ 1 \}.$$ 
In other words, $H_1$ and $H_2$ are connected. Consider the map
\begin{align*}
\mu \colon H_1 \times H_2 & \rightarrow K \\
(h_1, h_2) & \mapsto h_1 h_2 h_1^{-1} h_2^{-1}.
\end{align*}
Since $H_1 \times H_2$ is connected, we see that $\mu$ is a continuous map from a connected group to a discrete group, and hence must be constant. Since $(1,1) \in H_1 \times H_2$ maps to 1 under $\mu$, it follows that $\mu$ is trivial, and hence that $(H_1, H_2)$ is a dual pair in $G$. 
\end{proof}

\section{Dual pairs in Lie algebras} \label{sec:dps-in-lie-algs}

Let $G$ be a complex reductive Lie group. In this section, we discuss the dual pairs in $\mf{g} := \Lie (G)$ and their relationship to the dual pairs in $G$. To start, the following proposition gives a condition under which a Lie group dual pair gives rise to a corresponding Lie algebra dual pair.

\begin{prop} \label{prop:1}
Let $(G_1, G_2)$ be a dual pair in $G$, and set $\mf{g}_i := \Lie (G_i)$. If $Z_G(\langle \exp \mf{g}_i \rangle) = Z_G(G_i)$ for $i\in \{1,2\}$, then $(\mf{g}_1,\mf{g}_2)$ is a dual pair in $\mf{g}$.
\end{prop}

\begin{proof}
By \cite[Theorem 5.1]{sepanski}, for $X,Y \in \mf{g}$, $[X,Y]=0$ if and only if $e^{tX}$ and $e^{sY}$ commute for $s,t \in \mathbb{R}$. Therefore, it is clear that $\mf{g}_1 \subseteq \mf{z}_{\mf{g}}(\mf{g}_2)$ and $\mf{g}_2 \subseteq \mf{z}_{\mf{g}} (\mf{g}_1)$. Next, suppose that $[X,Y] = 0$ for all $Y \in \mf{g}_2$. Then $\exp (X) \in Z_G( \langle \exp \mf{g}_2 \rangle ) = Z_G(G_2) = G_1$, which gives that $X \in \mf{g}_1$. Therefore, $\mf{z}_{\mf{g}} (\mf{g}_2) \subseteq \mf{g}_1$. It follows that $\mf{g}_1 = \mf{z}_{\mf{g}}(\mf{g}_2)$, and by the same argument, that $\mf{g}_2 = \mf{z}_{\mf{g}} (\mf{g}_1)$.
\end{proof}

\begin{remark} \label{rmk:connected}
Note that if $G_1$ and $G_2$ are connected, then $\exp \mf{g}_i$ generates $G_i$ by \cite[Theorem 4.6]{sepanski}. Therefore, a particular consequence of Proposition \ref{prop:1} is that $(\mf{g}_1, \mf{g}_2)$ is a dual pair in $\mf{g}$ whenever $(G_1, G_2)$ is a connected dual pair in $G$. 
\end{remark}

Conversely, for any dual pair in a complex reductive Lie algebra $\mf{g}$, the following proposition shows that there is a corresponding Lie group dual pair in any complex reductive Lie group $G$ with $\Lie (G) = \mf{g}$.

\begin{prop} \label{prop:2}
Let $(\mf{g}_1, \mf{g}_2)$ be a dual pair in $\mf{g}$. Set $G_2 = Z_G(\langle \exp \mf{g}_1 \rangle)$ and $G_1 = Z_G(G_2)$. Then $(G_1, G_2)$ is a reductive dual pair in $G$ with Lie algebras $(\mf{g}_1, \mf{g}_2)$. 
\end{prop}

\begin{proof}
We first show that $(G_1, G_2)$ is a dual pair in $G$, and we then show that $(\mf{g}_1, \mf{g}_2)$ are the corresponding Lie algebras. To start, note that
$$Z_G(G_1) = Z_G(Z_G(G_2)) = Z_G(Z_G(Z_G(\langle \exp \mf{g}_1 \rangle))) = Z_G(\langle \exp \mf{g}_1 \rangle) = G_2.$$
Since $Z_G(G_2) = G_1$ by definition of $G_1$, we see that $(G_1, G_2)$ is indeed a dual pair in $G$.

Next, note that 
$$\Lie (G_2) = \Lie (Z_G(G_1^0)) = \mf{z}_{\mf{g}} (G_1^0) = \mf{z}_{\mf{g}} (\mf{g}_1) = \mf{g}_2,$$
where we have used \cite[Exercise 4.22]{sepanski}. Finally, we claim that $\Lie (G_1) = \mf{g}_1$. To see this, we first note that since $\langle \exp \mf{g}_1 \rangle \subseteq Z_G(Z_G(\langle \exp \mf{g}_1 \rangle)) = G_1$, we have that $\mf{g}_1 = \Lie (\langle \exp \mf{g}_1 \rangle) \subseteq \Lie (G_1)$. On the other hand, note that
$$\Lie (G_1) = \Lie (Z_G(G_2)) = \mf{z}_{\mf{g}} ( G_2 ).$$ 
So if $X \in \Lie (G_1)$, then $e^{tY} X e^{-tY} = X$ for all $Y \in \mf{g}_2$, meaning $[Y,X] = 0$ for all $Y \in \mf{g}_2$. It follows that $X \in \mf{z}_{\mf{g}}(\mf{g}_2) = \mf{g}_1$, and hence that $\Lie (G_1) = \mf{g}_1$, completing the proof. 
\end{proof}

Therefore, with an understanding of Lie algebra dual pairs, one can understand (some of the) dual pairs in the corresponding Lie groups. Since Rubenthaler outlines a classification of reductive dual pairs in reductive Lie algebras in \cite{rubenthaler}, one might naturally suggest trying to use Rubenthaler's classification to understand Lie group dual pairs. However, Rubenthaler's classification is rather complicated, and using his work to obtain complete lists of dual pairs requires a lot of nontrivial work from the reader. Fortunately, as we will see later in this paper, the dual pairs in the complex classical Lie groups can be completely classified using elementary methods (see Subsections \ref{subsec:GL}, \ref{subsec:SL}, \ref{subsec:Sp}, \ref{subsec:O}, \ref{subsec:SO}). Using these classifications in conjunction with Propositions \ref{prop:1} and \ref{prop:2}, one can moreover obtain complete classifications of the dual pairs in the complex classical Lie algebras (see Subsections \ref{subsec:sl-LA}, \ref{subsec:gl-LA}, \ref{subsec:sp-LA}, \ref{subsec:so-LA}).

\section{Embeddings} \label{sec:Embeddings}

In this section, we describe various embeddings of pairs of complex classical groups into larger complex classical groups. These embeddings will be very relevant for the classifications of dual pairs in the remainder of the paper.

\subsection{Embeddings into \texorpdfstring{$GL(X \otimes Y)$}{GL(n,C)}} \label{subsec:GL(XxY)}

Let $X$ and $Y$ be finite-dimensional complex vector spaces of dimensions $k$ and $\ell$, respectively. If we choose a basis of $X$, then $\End(X \otimes Y)$ gets identified with $k \times k$ matrices with entries in $\End (Y)$. We define the embedding $\iota \colon GL(X) \rightarrow GL(X \otimes Y)$ by
$$ x \xmapsto{\iota} \begin{pmatrix} x_{11}I_\ell & \cdots & x_{1k} I_\ell \\ \vdots & \ddots & \vdots \\ x_{k1}I_\ell & \cdots & x_{kk}I_\ell \end{pmatrix} \in GL(X \otimes Y),$$
and the embedding $\kappa \colon GL(Y) \rightarrow GL(X \otimes Y)$ by
$$y \xmapsto{\kappa} \begin{pmatrix} y & & \\ & \ddots & \\ & & y \end{pmatrix} \in GL(X \otimes Y),$$
where the image of $y$ under $\kappa$ has $k$ copies of $y$ along its diagonal. As we will soon see, $(\iota(GL(X)), \kappa(GL(Y)))$ is an example of a dual pair in $GL ( X\otimes Y)$.

\subsection{Embeddings into \texorpdfstring{$Sp(X \otimes Y)$}{Sp(2n,C)}}

Let $X$ and $Y$ be finite-dimensional complex vector spaces of dimensions $k$ and $2\ell$, respectively. Additionally, assume that $X$ admits an orthogonal form and that $Y$ admits a symplectic form (so that $X \otimes Y$ admits a symplectic form). Write $I_k$ for the orthogonal form on $X$ and 
$$\Omega_{2\ell} := \begin{pmatrix} & I_{\ell} \\ -I_{\ell} & \end{pmatrix}$$
for the symplectic form on $Y$. As before, if we choose a basis of $X$, then $\End(X \otimes Y)$ gets identified with $k \times k$ matrices with entries in $\End (Y)$. Then the forms on $X$ and $Y$ induce the symplectic form 
$$ \Omega := \begin{pmatrix} \Omega_{2\ell} & & \\ & \ddots & \\ & & \Omega_{2\ell} \end{pmatrix}$$
on $X \otimes Y$ so that $Sp(X \otimes Y) = \{ g \in GL(X \otimes Y) \; : \; g^t \Omega g = \Omega \}$. We get an embedding $\iota \colon O(X) \hookrightarrow Sp(X \otimes Y)$, where $\iota$ is as in Subsection \ref{subsec:GL(XxY)} (with $\ell$ replaced by $2\ell$). One can easily check that $\iota(x)^t \Omega \iota(x) = \Omega$ by recalling that $x^tx = I_k$ for all $x \in O(X)$. Similarly, we get an embedding $\kappa \colon Sp(Y) \rightarrow Sp(X \otimes Y)$, where $\kappa$ is as in Subsection \ref{subsec:GL(XxY)} (with $k$ replaced by $2k$). One can easily check that $\kappa (y)^t \Omega \kappa (y) = \Omega$ for $y \in Sp(Y)$ by recalling that $y^t \Omega_{2\ell} y = \Omega_{2\ell}$ for all $y \in Sp(Y)$. We will soon see that $(\iota (O(X)), \kappa (Sp(Y)))$ is an example of a dual pair in $Sp(X \otimes Y)$. (In fact, this is an example of a ``classical"/``irreducible" dual pair as studied in \cite{HoweRemarks,schmidt}.)

\subsection{Embeddings into \texorpdfstring{$Sp(X \otimes Y \oplus X^* \otimes Y^*)$}{Sp(2n,C)}} \label{subsec:GL-Sp-embedding}

Let $X$ and $Y$ be finite-dimensional complex vector spaces of dimensions $k$ and $\ell$, respectively. We define embeddings $GL(X) \hookrightarrow Sp(X \otimes Y \oplus X^* \otimes Y^*)$ and $GL(Y) \hookrightarrow Sp(X \otimes Y \oplus X^* \otimes Y^*)$ as follows:
\begin{center}
    \begin{tabular}{l l c l}
        $x$ & $\xmapsto{\iota}$ & $\begin{pmatrix} a_{11}I_\ell & \cdots & a_{1k}I_\ell \\ \vdots & \ddots & \vdots \\ a_{k1}I_\ell & \cdots & a_{kk}I_\ell \end{pmatrix}$ & $\xmapsto{\tau} \begin{pmatrix} \iota(x) & \\ & (\iota(x)^t)^{-1} \end{pmatrix}$ ,  \\
        $y$ & $\xmapsto{\kappa}$ & $\begin{pmatrix} y & & \\ & \ddots & \\ & & y \end{pmatrix}$ & $\xmapsto{\tau} \begin{pmatrix} \kappa(y) & \\ & (\kappa(y)^t)^{-1} \end{pmatrix}$.
    \end{tabular}
\end{center}
Here, $\iota$ and $\kappa$ are as defined in Subsection \ref{subsec:GL(XxY)}. It is straightforward to check that both $\tau (\iota (x))$ and $\tau (\kappa(y))$ preserve the symplectic form
$$\Omega_{2k\ell} = \begin{pmatrix} & I_{k\ell} \\ -I_{k\ell} & \end{pmatrix}$$
on $X \otimes Y \oplus X^* \otimes Y^*$. We will soon see that $(\tau(\iota (GL(X))), \tau(\kappa (GL(Y))))$ is an example of a dual pair in $Sp(X \otimes Y \oplus X^* \otimes Y^*)$. (In fact, this is another example of a ``classical"/``irreducible" dual pair as studied in \cite{HoweRemarks,schmidt}.)

\subsection{Embeddings into \texorpdfstring{$O(X\otimes Y)$}{O(n,C)}} 

Let $X$ and $Y$ be finite-dimensional complex vector spaces of dimensions $k$ and $\ell$, respectively. Additionally, assume that $X$ and $Y$ admit orthogonal forms $I_k$ and $I_\ell$. Then the forms on $X$ and $Y$ induce the orthogonal form $I_{k\ell}$ on $X\otimes Y$. Then we get embeddings $\iota \colon O(X) \hookrightarrow O(X \otimes Y)$ and $\kappa \colon O(Y) \hookrightarrow O(X \otimes Y)$, where $\iota$ and $\kappa$ are as in Subsection \ref{subsec:GL(XxY)}.

Next, let $X$ and $Y$ be finite-dimensional complex vector spaces of dimensions $2k$ and $2\ell$, respectively. Additionally, assume that $X$ and $Y$ admit symplectic forms $\Omega_{2k}$ and $\Omega_{2\ell}$. Then the forms on $X$ and $Y$ induce the orthogonal form
$$\Omega := \begin{pmatrix} & & & \Omega_{2\ell} & & \\ & & & & \ddots & \\ & & & & & \Omega_{2\ell} \\ -\Omega_{2\ell} & & & & & \\ & \ddots & & & & \\ & & -\Omega_{2\ell} & & & \\ \end{pmatrix}$$
on $X\otimes Y$ so that $O(X\otimes Y) = \{ g\in GL(X\otimes Y) \; : \; g^t \Omega g = \Omega \}$. We get embeddings $\iota \colon Sp(X) \hookrightarrow O(X\otimes W)$ and $\kappa \colon Sp(Y) \hookrightarrow O(X\otimes Y)$ (with $k$ and $\ell$ replaced by $2k$ and $2\ell$).

We will soon see that $( \iota (O(X)), \kappa (O(Y)))$ and $( \iota (Sp(X)), \kappa (Sp(Y)))$ are examples of dual pairs in $O(X\otimes Y)$. (These are again examples of ``classical"/``irreducible" dual pairs as studied in \cite{HoweRemarks,schmidt}.)

\subsection{Embeddings into \texorpdfstring{$O(X \otimes Y \oplus X^* \otimes Y^*)$}{O(n,C)}}

Let $X$ and $Y$ be finite-dimensional complex vector spaces of dimensions $k$ and $\ell$, respectively. Then we get embeddings $\tau \circ \iota \colon GL(X) \hookrightarrow O(X \otimes Y \oplus X^* \otimes Y^*)$ and $\tau \circ \kappa \colon GL(Y) \hookrightarrow O(X \otimes Y \oplus X^* \otimes Y^*)$ in the same way as in Subsection \ref{subsec:GL-Sp-embedding}, where the orthogonal form on $X \otimes Y \oplus X^* \otimes Y^*$ is 
$$\begin{pmatrix} & I_{k\ell} \\ I_{k\ell} & \end{pmatrix}.$$
These embeddings again yield examples of ``classical"/``irreducible" dual pairs \cite{HoweRemarks,schmidt}.

\section{Dual pairs in \texorpdfstring{$GL(n,\mathbb{C})$}{GL(n,C)}, \texorpdfstring{$SL(n,\mathbb{C})$}{SL(n,C)}, \texorpdfstring{$PGL(n,\mathbb{C})$}{PGL(n,C)}, and their Lie algebras} \label{sec:gl}

\subsection{Dual pairs in \texorpdfstring{$GL(n,\mathbb{C})$}{GL(n,C)}} \label{subsec:GL}

Let $U$ be a finite-dimensional complex vector space, and let $H$ be a complex reductive algebraic group. Since we are working in characteristic zero, every algebraic representation of $H$ is completely reducible. Therefore, for any algebraic representation $\varphi \colon H \rightarrow GL(U)$, Schur's lemma gives the decomposition
$$U \simeq \bigoplus_{i} V_i \otimes \Hom_H (V_i,U),$$
where the $V_i$'s are the nonisomorphic irreducible subrepresentations of $U$. This decomposition will be crucial for our classification of dual pairs in $GL(U)$.

\begin{prop} \label{prop:dual-pairs-GL}
Let $U$ be a finite-dimensional complex vector space. Then the dual pairs of $GL(U)$ are exactly the pairs of groups of the form
$$\left ( \prod_{i=1}^{r} GL(V_i), \; \prod_{i=1}^{r} GL(\Hom_H (V_i, U) ) \right ),$$
where $H$ is a complex reductive algebraic group with algebraic representation $\varphi \colon H \rightarrow GL(U)$, and where $(\varphi_1, V_1), \ldots, (\varphi_r, V_r)$ are the nonisomorphic irreducible subrepresentations of $(\varphi, U)$. 
\end{prop}

\begin{proof}
Set $W_i:= \Hom_H(V_i, U)$.  \\

\noindent \textit{Step 1: $Z_{GL(U)} \left ( \prod_{i=1}^{r} GL(V_i) \right ) = \prod_{i=1}^{r} GL(W_i) = GL(U)^{\varphi}$.} \\

Let $t \in Z_{GL(U)} \left ( \prod_{i=1}^{r} GL(V_i) \right )$. Note that $t$ commutes with any element of $\varphi (H)$ (since $\varphi (H) \subseteq \prod_{i=1}^{r} GL(V_i)$). In other words, $t$ is $H$-linear. Since $V_1, \ldots, V_r$ are nonisomorphic irreducible representations, Schur's lemma gives that there are no nontrivial $H$-linear maps between them. Therefore, $t$ necessarily preserves each $V_i \otimes W_i$, and hence can be decomposed as $t = t_1 \oplus \cdots \oplus t_r$, where $t_i \colon V_i \otimes W_i \rightarrow V_i \otimes W_i$ is $H$-linear. Applying Schur's lemma again gives that the action of $t_i$ on each $V_i$ is given by $\lambda \cdot I_{V_i}$ for some $\lambda \in \mathbb{C}^{\times}$. It follows that $t_i \in GL(W_i)$, giving that $t \in \prod_{i=1}^{r} GL(W_i)$ and hence that $Z_{GL(U)} \left ( \prod_{i=1}^{r} GL(V_i) \right ) \subseteq \prod_{i=1}^{r} GL(W_i)$. On the other hand, it is clear that $\prod_{i=1}^{r} GL(W_i) \subseteq Z_{GL(U)} \left ( \prod_{i=1}^{r} GL(V_i) \right )$. This same argument also shows that $\prod_{i=1}^{r} GL(W_i)$ consists exactly of the $H$-linear elements of $GL(U)$, completing Step 1. \\

\noindent \textit{Step 2: Each $W_i$ is an irreducible representation of $\prod_{i=1}^{r} GL(W_i)$ with multiplicity space $V_i$.} \\

Set $H':= \prod_{i=1}^{r} GL(W_i)$ and consider the representation $\rho \colon H' \hookrightarrow GL(U)$. We will show that the $W_i$'s are precisely the irreducible subrepresentations of $\rho$, and that $W_i$ has multiplicity space $V_i$. Certainly, each $W_i$ is a subrepresentation of $U$, since $\rho (h)w \in W_i$ for each $h \in H'$ and $w \in W_i$. Moreover, each $W_i$ is irreducible by Lemma \ref{lem:standard-irreps}. Therefore, we now have two decompositions of $U$: one as an $H$-representation and one as an $H'$-representation. Combining these gives that
$$U \simeq \bigoplus V_i \otimes \Hom_H(V_i, U) \simeq \bigoplus W_i \otimes \Hom_{H'}(W_i,U).$$
Since $W_i = \Hom_H (V_i, U)$, this shows that $V_i = \Hom_{H'}(W_i, U)$, completing Step 2. \\

\noindent \textit{Step 3: $Z_{GL(U)}\left ( \prod_{i=1}^{r} GL(W_i) \right ) = \prod_{i=1}^{r} GL(V_i)$.} \\

Step 2 shows that we can repeat Step 1 with $H'$ in place of $H$ and with the roles of $V_i$ and $W_i$ to get that $Z_{GL(U)} (\prod_{i=1}^{r} GL(W_i)) = \prod_{i=1}^{r} GL(V_i)$. \\

\noindent \textit{Step 4: All dual pairs of $GL(U)$ are of this form.} \\

It was shown in Step 1 that $\prod_{i=1}^{r} GL(W_i) = GL(U)^{\varphi}$. Therefore, Step 4 follows from Remark \ref{rmk:all pairs}.
\end{proof}

\begin{thm} \label{thm:dual-pairs-in-GL}
The dual pairs of $GL(n,\C)$ are exactly the pairs of groups of the form
\begin{equation*} 
\left ( \prod_{i=1}^{r} GL(k_i) , \; \prod_{i=1}^r GL(\ell_i) \right ),
\end{equation*}
where $k_i, \ell_i > 0$, and where $n=\sum_{i=1}^{r} k_i \ell_i$.
\end{thm}

\begin{proof}
This follows from the proof of Proposition \ref{prop:dual-pairs-GL} by taking 
$$H = \prod_{i=1}^{r} GL(V_i)$$ 
(with $\dim V_i = k_i$) and applying Lemma \ref{lem:standard-irreps}.
\end{proof}

\begin{cor} \label{cor:connected}
Let $(G_1, G_2)$ be a dual pair in $GL(n,\C)$. Then $G_1$ and $G_2$ are connected.
\end{cor}

\subsection{Dual pairs in \texorpdfstring{$SL(n,\mathbb{C})$}{SL(n,C)}} \label{subsec:SL}

\begin{thm} \label{thm:dual-pairs-in-SL}
The dual pairs of $SL(n,\C)$ are exactly the pairs of groups of the form
$$\left ( \Big ( \prod_{i=1}^{r} SL(k_i,\C) \Big ) \times \mathbb{C}^{r-1} , \; \Big ( \prod_{i=1}^{r} SL(\ell_i,\C) \Big ) \times \mathbb{C}^{r-1} \right ),$$
where $k_i, \ell_i > 0$, and where $n = \sum_{i=1}^{r} k_i \ell_i$.
\end{thm}

\begin{proof}
To start, we show that $GL(n,\C) = \C^{\times} \cdot SL(n,\C)$. For any $g \in GL(n,\C)$, let $d_g := \text{diag}((1/\det g)^{1/n}, \ldots, (1/\det g)^{1/n})$. Then $\det (g d_g) = \det (g) \det (d_g) = 1$. Moreover, $d_g \in Z(GL(n,\C)) = \C^{\times}$. Defining $g':=g d_g$, we can write $g = d_g^{-1}g'$, where $g' \in SL(n,\C)$ and $d_g^{-1} \in \C^{\times}$. In this way, we see that $GL(n,\C) = \C^{\times} \cdot SL(n,\C)$, as desired.

With this established, Theorems \ref{thm:subgroup-bijection} and \ref{thm:dual-pairs-in-GL} give that the dual pairs in $SL(n,\C)$ are exactly the pairs of groups of the form
$$\left ( \Big ( \prod_{i=1}^{r} GL(k_i) \Big ) \cap SL(n,\C) , \; \Big ( \prod_{i=1}^{r} GL(\ell_i) \Big ) \cap SL(n,\C)  \right ),$$
where $n=\sum_{i=1}^{r} k_i \ell_i$. The proof is completed by the observation that
$$\Big ( \prod_{i=1}^{r} GL(k_i) \Big ) \cap SL(n,\C) = \Big ( \prod_{i=1}^{r} SL(k_i) \Big ) \times \mathbb{C}^{r-1}$$
(and similarly for the other member of the pair).
\end{proof}

\subsection{Connected dual pairs in \texorpdfstring{$PGL(n,\mathbb{C})$}{PGL(n,C)}} \label{subsec:PGL}

In this subsection, we classify the connected dual pairs in $PGL(n,\C)$. Let $p \colon GL(n,\C) \rightarrow PGL(n,\C)$ be the canonical projection.

\begin{lemma} \label{lem:roots-of-unity}
Suppose $xyx^{-1} = cy$, where $x,y \in GL(n,\mathbb{C})$ and $c \in Z(GL(n,\mathbb{C})) \simeq \mathbb{C}^{\times}$. Then $c$ is an $n$-th root of unity. 
\end{lemma}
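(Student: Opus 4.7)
The plan is to take determinants of both sides of the relation $ABA^{-1} = cB$. On the left, $\det(ABA^{-1}) = \det(A)\det(B)\det(A)^{-1} = \det(B)$ by multiplicativity of the determinant together with $\det(A^{-1}) = \det(A)^{-1}$. On the right, since $c \in Z(GL(n,\mathbb{C})) \simeq \mathbb{C}^*$ corresponds to the scalar matrix $cI_n$, we have $\det(cB) = \det(cI_n)\det(B) = c^n \det(B)$.

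Equating the two expressions gives $\det(B) = c^n \det(B)$. Since $B \in GL(n,\mathbb{C})$, we have $\det(B) \neq 0$, so we may divide to conclude $c^n = 1$, i.e., $c$ is an $n$-th root of unity.

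There is no real obstacle here; the statement is essentially a one-line determinant calculation. The only subtlety worth making explicit in the writeup is the identification of the central element $c \in Z(GL(n,\mathbb{C}))$ with the scalar matrix $cI_n$, so that the factor of $c^n$ appears correctly when computing $\det(cB)$.
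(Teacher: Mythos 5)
Your proof is correct and is essentially the same determinant argument the paper uses: both compute $\det(ABA^{-1}) = \det(B)$ and $\det(cB) = c^n\det(B)$ and cancel $\det(B) \neq 0$. Your explicit remarks about $\det(B)\neq 0$ and the identification of $c$ with $cI_n$ are fine and add nothing that conflicts with the paper's argument.
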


\begin{proof}
Using that the determinant is multiplicative, we see that $\det (xyx^{-1}) = \det (y)$. Therefore, the relation $xyx^{-1} = cy$ gives that 
$$\det (y) = \det (cy) = c^n \det (y).$$
This shows that $c^n = 1$, or that $c$ is an $n$-th root of unity (not necessarily primitive).
\end{proof}

\begin{prop} \label{prop:GL-PGL-descent}
Let $(G_1, G_2)$ be a dual pair in $GL(n,\C)$. Then $(p (G_1), p(G_2))$ is a dual pair in $PGL(n,\C)$.
\end{prop}

\begin{proof}
By Corollary \ref{cor:connected}, $G_1$ and $G_2$ are connected. Write $Z:= Z(GL(n,\C)) \simeq \mathbb{C}^{\times}$. By Lemma \ref{lem:roots-of-unity}, both $K_{G_1,Z}:= \{ tg_1t^{-1}g_1^{-1} \; : \; t \in p^{-1}(Z_{PGL(n,\C)}(p (G_1))), \; g_1 \in G_1 \}$ and $K_{G_2,Z}:= \{ tg_2t^{-1}g_2^{-1} \; : \; t \in p^{-1}(Z_{PGL(n,\C)}(p (G_2))), \; g_2 \in G_2 \}$ contain only $n$-th roots of unity, and hence are discrete. Therefore, Proposition \ref{prop:connected-dp-descend} applies, completing the proof.
\end{proof}

Proposition \ref{prop:GL-PGL-descent} shows that every dual pair in $GL(n,\C)$ descends to a dual pair in $PGL(n,\C)$ under the canonical projection. In the other direction, the following proposition shows that the connected dual pairs in $PGL(n,\C)$ lift to dual pairs in $GL(n,\C)$.

\begin{prop} \label{prop:connected-connected}
Let $(\overline{G}_1, \overline{G}_2)$ be a connected dual pair in $PGL(n,\C)$, and define $G_1:=p^{-1}(\overline{G}_1)$ and $G_2:= p^{-1}(\overline{G}_2)$. Then $(G_1, G_2)$ is a dual pair in $GL(n,\C)$.
\end{prop}

\begin{proof}
This follows immediately from Proposition \ref{prop:quotient-dp-lifts} and Lemma \ref{lem:roots-of-unity}.
\end{proof}

Classifying the disconnected dual pairs in $PGL(n,\C)$ is a much more difficult question. In a forthcoming paper, we show that the problem of classifying dual pairs in $PGL(n,\C)$ can be reduced to the problem of classifying certain pairs of extensions
$$1 \rightarrow G_1^{\circ} \rightarrow G_1 \rightarrow \Gamma \rightarrow 1 \hspace{.75cm} \text{ and } \hspace{.75cm} 1 \rightarrow G_2^{\circ} \rightarrow G_2 \rightarrow \hat{\Gamma} \rightarrow 1,$$
where $G_1^{\circ} = Z_{GL(n,\C)}(G_2)$ and $G_2^{\circ} = Z_{GL(n,\C)}(G_1)$ are members of some (possibly different) $GL(n,\C)$ dual pairs, and where $\Gamma$ is a finite abelian group. In this same forthcoming paper, we also classify such pairs of extensions using an in-depth analysis of the $\hat{\Gamma}$-orbits (resp.~$\Gamma$-orbits) of the $G_1$-irreducibles and $G_2^{\circ}$-irreducibles (resp.~$G_2$-irreducibles and $G_1^{\circ}$-irreducibles).

\subsection{Dual pairs in \texorpdfstring{$\mf{sl}(n,\C)$}{sl(n,C)}} \label{subsec:sl-LA}

\begin{thm} \label{thm:sln-dps}
The dual pairs in $\mf{sl}(n,\C)$ are exactly the pairs of subalgebras of the form
$$\left ( \Big ( \bigoplus_{1 = 1}^{r} \mf{sl}(k_i,\C) \Big ) \oplus \mathbb{C}^{r-1}, \; \Big ( \bigoplus_{i =1}^r \mf{sl}(k_i,\C) \Big ) \oplus \mathbb{C}^{r-1} \right ),$$
where $k_i, \ell_i > 0$, and where $\sum_i k_i \ell_i = n$.
\end{thm}

\begin{proof}
By Theorem \ref{thm:dual-pairs-in-SL}, 
$$\Bigg ( \Big ( \prod_{i=1}^r GL(k_i,\C) \Big ) \cap SL(n,\C), \; \Big ( \prod_{i=1}^r GL(\ell_i,\C) \Big ) \cap SL(n,\C)  \Bigg )$$
is a dual pair in $SL(n,\C)$. Moreover, recalling that $\exp \mf{gl}(n,\C) = GL(n,\C)$, it is not hard to check that 
$$\exp \left ( \Big ( \bigoplus_{i=1}^r \mf{gl}(k_i,\C)  \Big ) \cap \mf{sl}(n,\C) \right ) = \Big ( \prod_{i=1}^r GL(k_i,\C) \Big ) \cap SL(n,\C)  =  \Big ( \prod_{i=1}^{r} SL(k_i,\C) \Big ) \times \C^{r-1} .$$
Therefore, Proposition \ref{prop:1} gives that the pairs of subalgebras of the form described in the proposition statement are indeed dual pairs. 

On the other hand, suppose $(\mf{g}_1,\mf{g}_2)$ is a dual pair in $\mf{sl}(n,\C)$. Set $G_1^0 = \langle \exp \mf{g}_1 \rangle$, $G_2 = Z_G(G_1^0)$, and $G_1 = Z_G(G_2)$. Then by Proposition \ref{prop:2} and Theorem \ref{thm:dual-pairs-in-SL}, $(G_1, G_2)$ is of the form
$$(G_1, G_2) = \Bigg ( \Big ( \prod_{i=1}^{r} SL(k_i,\C) \Big ) \times \C^{r-1} , \; \Big ( \prod_{i=1}^{r} SL(\ell_i,\C) \Big ) \times \C^{r-1} \Bigg )  ,$$ 
where $n = \sum_{i=1}^{r} k_i \ell_i$. Moreover, Proposition \ref{prop:2} gives that 
$$\Lie (G_1) = \mf{g}_1 = \mf{sl}(k_1,\C) \oplus \cdots \oplus \mf{sl}(k_r,\C) \oplus \C^{r-1}$$
and that
$$\Lie (G_2) = \mf{g}_2 = \mf{sl}(\ell_1,\C) \oplus \cdots \oplus \mf{sl}(\ell_r,\C) \oplus \C^{r-1},$$
completing the proof.
\end{proof}

This result, obtained here from our analysis of Lie group dual pairs and the correspondence between Lie group and Lie algebra dual pairs, can also be obtained from a Lie algebras context alone. More precisely, we can make use of Rubenthaler's reduction of the classification of reductive dual pairs in reductive Lie algebras to the classification of semisimple dual pairs in simple Lie algebras \cite[Section 5]{rubenthaler}. This reduction requires the notion of a \textit{Levi subalgebra}: 

Let $\mf{g}$ be a semisimple Lie algebra, and let $\mf{h}$ be a Cartan subalgebra of $\mf{g}$. Let $\Psi = (\alpha_1 , \ldots , \alpha_n)$ be a set of simple roots associated to $(\mf{g}, \mf{h})$. For $\theta \subseteq \Psi$, let $H_{\theta}$ be the unique element of $\mf{h}$ determined by
  \[
    \left\{\begin{array}{ll}
        \alpha (H_{\theta}) = 0 & \text{if } \alpha \in \theta \\
        \alpha (H_{\theta}) = 2 & \text{if } \alpha \in \Psi \setminus \theta         \end{array}\right.
  \]  
For $p \in \mathbb{Z}$, set
$$d_p (\theta) = \{ X \in \mf{g} \; \vert \; [H_{\theta}, X] = 2pX \}, \hspace{.5cm} \text{ so that } \hspace{.5cm} \mf{g} = \bigoplus_{p \in \mathbb{Z}} d_p (\theta).$$

\begin{defn}[Rubenthaler {\cite[Section 1.2]{rubenthaler}}]
The reductive algebra $\ell_{\theta} := d_0 (\theta)$ is called the \textbf{standard Levi subalgebra associated to $\theta$}.
\end{defn}

\begin{lemma} \label{lem:sl-levis}
The Levi subalgebras of $\mf{sl}(n,\C)$ are exactly the subalgebras of the form 
$$\mf{sl}(n_1,\C) \oplus \cdots \oplus \mf{sl}(n_r,\C) \oplus \C^{r-1},$$
where $\sum_i n_i = n$. 
\end{lemma}

\begin{proof}
Let $\mf{h} := \{ \diag ( z_1 , \ldots , z_n ) \; \vert \; z_i \in \C , \, \sum_i z_i = 0 \}$. Then $\mf{h}$ is a Cartan subalgebra of $\mf{sl}(n,\C)$, and the set $\Psi := \{ \alpha_i := \varepsilon_i - \varepsilon_{i+1} \; \vert \; 1 \leq i \leq n-1 \}$ is a set of simple roots for the root system corresponding to $(\mf{sl}(n,\C), \mf{h})$. As $\theta$ varies over subsets of $\Psi$, we see that $H_{\theta}$ varies over diagonal matrices $\diag (z_1, \ldots , z_n) \in \mf{h}$ such that $z_i - z_{i+1} \in \{ 0,2 \}$ for all $1 \leq i \leq n-1$. In this way, we see that the Levi subalgebras of $\mf{sl}(n,\mathbb{C})$ are exactly the subalgebras
$$ \left [ \mf{gl}(n_1, \mathbb{C}) \oplus \cdots \oplus \mf{gl}(n_r,\mathbb{C}) \right ] \cap \mf{sl}(n,\mathbb{C}) = \mf{sl}(n_1, \mathbb{C}) \oplus \cdots \oplus \mf{sl}(n_r,\mathbb{C}) \oplus \mathbb{C}^{r-1},$$
completing the proof.
\end{proof}

\begin{proof}[Alternative proof of Theorem \ref{thm:sln-dps}]
To start, let us confirm that a pair of this form is in fact a dual pair in $\mf{sl}(n,\C)$. To this end, note first that by \cite[Lemma 5.1]{rubenthaler} and \cite[Lemma 5.3]{rubenthaler},
$$(\mf{sl}(k_1,\C)\oplus \cdots \oplus \mf{sl}(k_r,\C) \oplus \C^{r-1}, \; \mf{sl}(\ell_1,\C) \oplus \cdots \oplus \mf{sl}(\ell_r,\C) \oplus \C^{r-1}  )$$ 
is a dual pair in $\mf{sl}(k_1 \ell_1 ,\C) \oplus \cdots \oplus \mf{sl}( k_r \ell_r ,\C) \oplus \C^{r-1}$. Then since 
$$\mf{sl}(k_1 \ell_1 ,\C) \oplus \cdots \oplus \mf{sl}( k_r \ell_r ,\C) \oplus \C^{r-1}$$ 
is a Levi subalgebra of $\mf{sl}(n,C)$ (by Lemma \ref{lem:sl-levis}), \cite[Lemma 5.2]{rubenthaler} gives that 
$$(\mf{sl}(k_1,\C)\oplus \cdots \oplus \mf{sl}(k_r,\C) \oplus \C^{r-1}, \; \mf{sl}(\ell_1,\C) \oplus \cdots \oplus \mf{sl}(\ell_r,\C) \oplus \C^{r-1}  )$$ 
is a dual pair in $\mf{sl}(n,\C)$. 

Conversely, suppose we have a dual pair $(\mf{a}, \mf{b})$ in $\mf{sl}(n,\C)$. Then by \cite[Lemma 5.2]{rubenthaler}, we have that $\mf{a} \cap \mf{b} = Z_{\mf{a}} = Z_{\mf{b}} =: Z$, that $\mf{a} = Z \oplus [\mf{a},\mf{a}]$ and $\mf{b} = Z \oplus [\mf{b}, \mf{b}]$, and that $\ell_Z = \mf{z}_{\mf{sl}(n,\C)}(Z)$ is a Levi subalgebra of $\mf{sl}(n,C)$. By Lemma \ref{lem:sl-levis}, we can write 
$$\ell_Z = \mf{sl}(n_1,\C) \oplus \cdots \oplus \mf{sl}(n_r,\C) \oplus \C^{r-1}$$
for some $n_1, \ldots , n_r$ so that $\sum_i n_i = n$. It follows that $Z = \C^{r-1}$. Using \cite[Lemma 5.2]{rubenthaler} yet again, we get that $([\mf{a},\mf{a}], [\mf{b},\mf{b}])$ is a semisimple dual pair in $[\ell_Z, \ell_Z ] = \mf{sl}(n_1,\C) \oplus \cdots \oplus \mf{sl}(n_r, \C)$. By \cite[Lemma 5.3]{rubenthaler} and Rubenthaler's classification of the semisimple dual pairs in $\mf{sl}(n,\C)$, the semisimple dual pairs in $[\ell_Z, \ell_Z]$ are exactly those of the form
$$(\mf{sl}(k_1,\C)\oplus \cdots \oplus \mf{sl}(k_r,\C), \; \mf{sl}(\ell_1,\C) \oplus \cdots \oplus \mf{sl}(\ell_r,\C) ),$$
where $\sum_i k_i \ell_i = \sum_i n_i = n$. It follows that $[\mf{a}, \mf{a}]$ and $[\mf{b},\mf{b}]$ are of the form
$$[\mf{a}, \mf{a}] = \mf{sl}(k_1,\C)\oplus \cdots \oplus \mf{sl}(k_r,\C) \hspace{.5cm} \text{ and } \hspace{.5cm} [\mf{b}, \mf{b}] = \mf{sl}(\ell_1,\C) \oplus \cdots \oplus \mf{sl}(\ell_r,\C) ),$$
and hence that $\mf{a}$ and $\mf{b}$ are of the form 
$$\mf{a} = \mf{sl}(k_1,\C)\oplus \cdots \oplus \mf{sl}(k_r,\C) \oplus \C^{r-1} \hspace{.25cm} \text{ and } \hspace{.25cm} \mf{b} = \mf{sl}(\ell_1,\C) \oplus \cdots \oplus \mf{sl}(\ell_r,\C) \oplus \C^{r-1} ,$$
completing the proof.
\end{proof}

\subsection{Dual pairs in \texorpdfstring{$\mf{gl}(n,\C)$}{gl(n,C)}} \label{subsec:gl-LA}

\begin{thm} \label{thm:dual-pairs-in-gl}
The dual pairs in $\mf{gl}(n,\mathbb{C})$ are exactly the pairs of subalgebras of the form
$$\left ( \bigoplus_{i=1}^r \mf{gl}(k_i,\C) , \; \bigoplus_{i=1}^r \mf{gl}(\ell_i,\C) \right ),$$
where $k_i, \ell_i >0$, and where $\sum_i k_i \ell_i = n$.
\end{thm}

\begin{proof}
By Proposition \ref{prop:1}, Remark \ref{rmk:connected}, and Theorem \ref{thm:dual-pairs-in-GL}, all of the pairs of this form are indeed dual pairs in $\mf{gl}(n,\C)$. On the other hand, suppose $(\mf{g}_1, \mf{g}_2)$ is a dual pair in $\mf{gl}(n,\C)$. Set $G_2 = Z_G( \langle \exp \mf{g}_1 \rangle)$, and $G_1 = Z_G (G_2)$. Then by Proposition \ref{prop:2} and Theorem \ref{thm:dual-pairs-in-GL}, $(G_1, G_2)$ is of the form
$$(G_1, G_2) = \left ( \prod_{i=1}^{r} GL(k_i), \; \prod_{i=1}^{r} GL(\ell_i) \right ),$$
where $\sum_i k_i \ell_i = n$, and where $\Lie (G_i) = \mf{g}_i$. It follows that $(\mf{g}_1, \mf{g}_2)$ is of the form described in the proposition statement, completing the proof. 
\end{proof}

This result, obtained here from our analysis of Lie group dual pairs and the correspondence between Lie group and Lie algebra dual pairs, can also be obtained from a Lie algebras context alone (using Rubenthaler's reduction and our analysis of dual pairs in $\mf{sl}(n,\C)$).

\begin{proof}[Alternative proof of Theorem \ref{thm:dual-pairs-in-gl}]
To start, note that we can write 
$$\mf{gl}(n,\C) = [\mf{gl}(n,\C), \mf{gl}(n,\C) ] \oplus Z_{\mf{gl}(n,\C)} = \mf{sl}(n,\C) \oplus \C .$$
Therefore, by \cite[Lemma 5.1]{rubenthaler} and Theorem \ref{thm:sln-dps}, the dual pairs in $\mf{gl}(n,\C)$ are exactly the pairs of the form 
\begin{align*}
(\mf{sl}(k_1,\C)\oplus \cdots \oplus \mf{sl}(k_r,\C) \oplus \C^{r-1} \oplus \C, & \; \mf{sl}(\ell_1,\C) \oplus \cdots \oplus \mf{sl}(\ell_r,\C) \oplus \C^{r-1} \oplus \C ) \\
= ( \mf{gl}(k_1,\C) \oplus \cdots \oplus \mf{gl}(k_r,\C),  & \; \mf{gl}(\ell_1,\C) \oplus \cdots \oplus \mf{gl}(\ell_r,\C)),
\end{align*}
where $\sum_i k_i \ell_i = n$.
\end{proof}

In Subsections \ref{subsec:sl-LA} and \ref{subsec:gl-LA}, we demonstrated how to verify our classifications of $\mf{sl}(n,\C)$ and $\mf{gl}(n,\C)$ dual pairs using Rubenthaler's work. While the Lie algebra dual pair results in the remainder of the paper can also be checked against Rubenthaler's work, such verifications are much more difficult since not all dual pairs in $\mf{sp}(2n,\C)$ and $\mf{so}(n,\C)$ are ``$S$-irreducible" \cite[Subsections 5.7, 6.4, 6.6, 6.7]{rubenthaler}.

\subsection{Bijections of dual pairs}

The results in this section can be nicely related as follows:

\begin{thm}
The following sets are in bijection:
\begin{enumerate}[label = (\roman*)]
\item dual pairs in $GL(n,\mathbb{C})$;
\item dual pairs in $SL(n,\mathbb{C})$;
\item connected dual pairs in $PGL(n,\mathbb{C})$;
\item dual pairs in $\mf{gl}(n,\mathbb{C})$;
\item dual pairs in $\mf{sl}(n,\mathbb{C})$. 
\end{enumerate}
\end{thm}

\begin{proof}
\noindent \underline{$(i) \leftrightarrow (ii)$:} The bijection $(i) \leftrightarrow (ii)$ follows directly from Theorem \ref{thm:subgroup-bijection}, where $\rightarrow$ is given by intersection with $SL(n,\mathbb{C})$ and $\leftarrow$ is given by multiplication by $\mathbb{C}^{\times}$.\\  

\noindent \underline{$(i) \leftrightarrow (iii)$:} As explained in the proof of Proposition \ref{prop:connected-connected}, we have that a subgroup $G$ in $PGL(n,\mathbb{C})$ is connected if and only if its preimage in $GL(n,\mathbb{C})$ is connected. Since all dual pairs in $GL(n,\mathbb{C})$ are connected (by Corollary \ref{cor:connected}), Proposition \ref{prop:GL-PGL-descent} and Proposition \ref{prop:connected-connected} therefore give that the dual pairs in $GL(n,\mathbb{C})$ are in bijection with the connected dual pairs in $PGL(n,\mathbb{C})$. In this bijection, $(i) \rightarrow (iii)$ is given by $(G_1, G_2) \mapsto (p(G_1), p(G_2))$, and $(iii) \leftarrow (i)$ is given by $(\overline{G}_1, \overline{G}_2) \mapsto ( p^{-1}(\overline{G}_1), p^{-1}(\overline{G}_2) )$. \\

\noindent \underline{$(i) \leftrightarrow (iv)$:} Recall that the dual pairs in $GL(n,\C)$ are exactly the pairs of groups of the form
$$(GL(k_1,\C) \times \cdots \times GL(k_r,\C), \; GL(\ell_1,\C) \times \cdots \times GL(\ell_r,\C) ),$$
where $\sum_i k_i \ell_i = n$ (by Theorem \ref{thm:dual-pairs-in-GL}). Similarly, by Theorem \ref{thm:dual-pairs-in-gl}, the dual pairs in $\mf{gl}(n,\C)$ are exactly the pairs of subalgebras of the form 
$$(\mf{gl}(k_1,\C) \oplus \cdots \oplus \mf{gl}(k_r,\C) ,\; \mf{gl}(\ell_1,\C) \oplus \cdots \oplus \mf{gl}(\ell_r,\C) ),$$
where $k_i,\ell_i >0$, and where $\sum_i k_i \ell_i = n$. Therefore, there is clearly a bijection $(i) \leftrightarrow (iv)$, where $\rightarrow$ is given by $\Lie (\cdot )$ and $\leftarrow$ is given by $\exp (\cdot )$. Also, once can check that these align with the maps given by Propositions \ref{prop:1} and \ref{prop:2}.\\

\noindent \underline{$(iv) \leftrightarrow (v)$:} This follows from Theorems \ref{thm:sln-dps} and \ref{thm:dual-pairs-in-gl}, where $\rightarrow$ is given by intersection with $\mf{sl}(n,\C)$ and $\leftarrow$ is given by $\oplus \C$.
\end{proof}

\begin{remark}
Recall from the proof of Theorem \ref{thm:dual-pairs-in-SL} that $GL(n,\C) = \C^{\times} \cdot SL(n,\C)$. Therefore, for any $g = zs \in GL(n,\C)$ (with $z \in \C^{\times}$ and $s \in SL(n,\C)$), 
$$g \cdot \C^{\times} = (sz) \cdot \C^{\times} = s \cdot \C^{\times}.$$
In this way, we see that $PGL(n,\C) = PSL(n,\C)$. This justifies our exclusion of $PSL(n,\C)$ from the list of complex projective classical groups that are under consideration.
\end{remark}

\section{Dual pairs in \texorpdfstring{$Sp(2n,\C)$}{Sp(2n,C)}, \texorpdfstring{$PSp(2n,\C)$}{PSp(2n,C)}, and \texorpdfstring{$\mf{sp}(2n,\C)$}{sp(2n,C)}} \label{sec:sp}

\subsection{Dual pairs in \texorpdfstring{$Sp(2n,\C)$}{Sp(2n,C)}} \label{subsec:Sp}

The following lemma sets us up to apply an argument analogous to the proof of Proposition \ref{prop:dual-pairs-GL} in the context of $Sp(2n,\C)$, which will allow us classify the dual pairs in $Sp(2n,\C)$.

\begin{lemma} \label{lem:cap-Sp(U)}
Let $U$ be a finite-dimensional complex symplectic vector space of even dimension. Let $H$ be a complex reductive algebraic group, and let $\varphi \colon H \rightarrow Sp(U)$ be an algebraic symplectic representation of $H$ with nonisomorphic irreducible subrepresentations 
$$\{ V_{\gamma} \}_{\gamma} = \{ V_{\mu}, V_{\nu}, V_{\lambda} \}_{\mu \not \simeq \mu^*, \; \substack{ \nu \simeq \nu^* \\ \text{orthog.}}, \; \substack{\lambda \simeq \lambda^* \\ \text{sympl.}} }.$$ 
Set $W_{\gamma}:= \Hom_H(V_{\gamma},U)$. Then
\begin{align}
    \left ( \prod_{\gamma} GL(V_{\gamma}) \right ) \cap Sp(U) &= \prod_{\mu \not \simeq \mu^*} GL(V_{\mu}) \prod_{\substack{\nu \simeq \nu^* \\ \text{orthog.}}} O(V_{\nu}) \prod_{\substack{\lambda \simeq \lambda^* \\ \text{sympl.}}} Sp(V_{\lambda}), \text{ and } \label{eq:align-1} \\
\left ( \prod_{\gamma} GL(W_{\gamma}) \right ) \cap Sp(U) &= \prod_{\mu \not \simeq \mu^*} GL(W_{\mu}) \prod_{\substack{\nu \simeq \nu^* \\ \text{orthog.}}} Sp(W_{\nu}) \prod_{\substack{\lambda \simeq \lambda^* \\ \text{sympl.}}} O(W_{\lambda}). \label{eq:align-2}
\end{align}
\end{lemma}

\begin{proof}
To start, note that Schur's lemma gives the decomposition
$$U \simeq \Bigg ( \bigoplus_{\mu \not \simeq \mu^*} V_{\mu} \otimes W_{\mu} \Bigg ) \oplus \Bigg ( \bigoplus_{\substack{\nu \simeq \nu^* \\ \text{orthog.} }} V_{\nu} \otimes W_{\nu} \Bigg ) \oplus \Bigg ( \bigoplus_{\substack{\lambda \simeq \lambda^* \\ \text{sympl.} }} V_{\lambda} \otimes W_{\lambda} \Bigg ) .$$
Our strategy in this proof is to understand the structure on each of these summands and tensor factors that is induced by the symplectic structure of $U$. Then (\ref{eq:align-1}) and (\ref{eq:align-2}) will follow from a consideration of which elements of $\prod_{\gamma} GL(V_{\gamma})$ and $\prod_{\gamma} GL(W_{\gamma})$ preserve this substructure. 

Since $U$ is symplectic and finite-dimensional, we have that $U \simeq U^*$, which allows us to write
$$U \simeq \Bigg ( \bigoplus_{(\mu, \mu^*)} (V_{\mu} \otimes W_{\mu}) \oplus (V_{\mu^*} \otimes W_{\mu^*}) \Bigg ) \oplus \Bigg ( \bigoplus_{\substack{\nu \simeq \nu^* \\ \text{orthog.} }} V_{\nu} \otimes W_{\nu} \Bigg )\oplus \Bigg ( \bigoplus_{\substack{\lambda \simeq \lambda^* \\ \text{sympl.} }} V_{\lambda} \otimes W_{\lambda} \Bigg ).$$
Now, note that for each irreducible representation $V_{\gamma}$ of $H$, $V_{\gamma^*}$ is also irreducible, so
\[
    \dim (\Hom_H(V_{\gamma},V_{\gamma^*})) = \Bigg \{\begin{array}{lr}
        1 & \text{if } V_{\gamma} \cong V_{\gamma^*}\\
        0 & \text{if } V_{\gamma} \not \cong V_{\gamma^*}
        \end{array}.
  \]
Note also that $H$-invariant bilinear forms are in one-to-one correspondence with the elements of $\Hom_H(V_{\gamma},V_{\gamma^*})$. Therefore, the $V_{\mu}$'s do not admit $H$-invariant bilinear forms, whereas the remaining irreducibles each inherit an $H$-invariant bilinear form from $U$ (which must be either symplectic or orthogonal). It is not hard to see that, by extension, the $V_{\mu} \otimes W_{\mu}$'s do not admit $H$-invariant bilinear forms, whereas the $V_{\nu} \otimes W_{\nu}$'s and $V_{\lambda} \otimes W_{\lambda}$'s inherit $H$-invariant symplectic bilinear forms from $U$. Moreover, since $W_{\gamma} \simeq W_{\gamma^*}$ for $\gamma \simeq \gamma^*$, each $W_{\gamma}$ admits an $H$-invariant bilinear form, which must be symplectic if the form on $V_{\gamma}$ is orthogonal and must be orthogonal if the form on $V_{\gamma}$ is symplectic (since the symplectic form on $V_{\gamma} \otimes W_{\gamma}$ can be obtained as the product of the forms on $V_{\gamma}$ and $W_{\gamma}$).

Although each $V_{\mu} \otimes W_{\mu}$ for $\mu \not \simeq \mu^*$ does not admit an $H$-invariant bilinear form, we claim that each $(V_{\mu} \otimes W_{\mu}) \oplus (V_{\mu^*} \otimes W_{\mu^*})$ admits an $H$-invariant symplectic form. Indeed, write $E:= V_{\mu} \otimes W_{\mu}$ and $E^* := V_{\mu^*} \otimes W_{\mu^*}$. Then for $e,e' \in E$ and $\mathcal{E}, \mathcal{E}' \in E^*$, we see that
$$\langle (e,\mathcal{E}), \; (e',\mathcal{E}') \rangle := \mathcal{E}'(e)-\mathcal{E}(e')$$
defines an $H$-invariant symplectic bilinear form on $E \oplus E^*$.

Finally, it is clear that an element of $\prod_{\gamma} GL(V_{\gamma})$ preserves the symplectic bilinear form on $U$ if and only if it preserves the induced bilinear form on each $V_{\gamma}$. Similarly, an element of $\prod_{\gamma} GL(W_{\gamma})$ preserves the symplectic bilinear form on $U$ if and only if it preserves the induced bilinear form on each $W_{\gamma}$. This established \eqref{eq:align-1} and \eqref{eq:align-2}, completing the proof. 
\end{proof}

\begin{example} \label{ex:symplectic-form}
Let $H$ be a complex reductive algebraic group, and let $\varphi \colon H \rightarrow Sp(U)$ be an algebraic symplectic representation of $H$ with nonisomorphic irreducible subrepresentations $V_1$, $V_1^*$, and $V_2$. Write $U \simeq V_1 \otimes W_1 \oplus V_1^* \otimes W_1^* \oplus V_2 \otimes W_2$, suppose that $V_1 \not \simeq V_1^*$ as representations, that $V_2$ is orthogonal, and that $\dim V_i = \dim W_i = 2$. The matrix
\[
\Omega = \begin{pmatrix} & I_4 & & & & \\-I_4 & & & & & \\ & & & 1 & & \\ & & -1 & & & \\ & & & & & 1 \\ & & & & -1 & \end{pmatrix}
\]
defines a symplectic form on $U$ (so that any matrix $g \in Sp(U)$ satisfies $g^t \Omega g = \Omega$). The symplectic form $\Omega$ induces the symplectic form 
\[
\Omega_1 := \begin{pmatrix} & I_4 \\ -I_4 & \end{pmatrix}
\]
on $V_1 \otimes W_1 \oplus V_1^* \otimes W_1^*$. To see that $A = \left ( \begin{smallmatrix} a & b \\ c & d \end{smallmatrix} \right ) \in GL(V_1)$ and $B = \left (  \begin{smallmatrix} e & f \\ g & h \end{smallmatrix} \right ) \in GL(W_1)$ preserve $\Omega_1$, we consider the images of $A$ and $B$ under the embeddings of $GL(V_1)$ and $GL(W_1)$ into $GL(V_1 \otimes W_1 \oplus V_1^* \otimes W_1^*)$:
\begin{center}
\begin{tabular}{r l l}
    $A$ &$\xmapsto{\iota} \begin{pmatrix} aI_2 & bI_2 \\ cI_2 & dI_2 \end{pmatrix}$ & $\xmapsto{\tau} \begin{pmatrix} \iota(A) & \\ & (\iota(A)^t)^{-1} \end{pmatrix}$ \\
$B$ & $\xmapsto{\kappa} \begin{pmatrix} B & \\ & B \end{pmatrix}$ & $\xmapsto{\tau} \begin{pmatrix} \kappa(B) & \\ & (\kappa(B)^t)^{-1} \end{pmatrix}$
\end{tabular}
\end{center}
It is straightforward to check that $\tau(\iota(A))$ and $\tau(\kappa(B))$ preserve $\Omega_1$, and hence lie in $Sp(V_1 \otimes W_1 \oplus V_1^* \otimes W_1^*)$. Additionally, the symplectic form $\Omega$ induces the symplectic form
\[
\Omega_2 := \begin{pmatrix} & 1 & & \\ -1 & & & \\ & & & 1 \\ & & -1 & \end{pmatrix} 
\]
on $Sp(V_2 \otimes W_2)$. Consider the images of $A' = \left ( \begin{smallmatrix} a' & b' \\ c' & d' \end{smallmatrix} \right ) \in GL(V_2)$ and $B' = \left ( \begin{smallmatrix} e' & f' \\ g' & h' \end{smallmatrix} \right ) \in GL(W_2)$ under the embeddings of $GL(V_2)$ and $GL(W_2)$ into $GL(V_2 \otimes W_2)$:
\begin{center}
\begin{tabular}{r l l}
    $A'$ & $\xmapsto{\iota} \begin{pmatrix} a'I_2 & b'I_2 \\ c'I_2 & d'I_2 \end{pmatrix}$ & $\in GL(V_2 \otimes W_2)$ \\
    $B'$ & $\xmapsto{\kappa} \begin{pmatrix} B' & \\ & B' \end{pmatrix}$ & $\in GL(V_2 \otimes W_2)$
\end{tabular}
\end{center}
It is straightforward to check that $\iota(A')$ preserves $\Omega_2$ if and only if $A' \in O(V_2)$ and that $\kappa(B')$ preserves $\Omega_2$ if and only if $B' \in Sp(W_2)$. It follows that in this case, 
\begin{align*}
    \left ( GL(V_1) \times GL(V_1^*) \times GL(V_2) \right ) \cap Sp(U) &= GL(V_1) \times GL(V_1^*) \times O(V_2), \; \text{ and} \\
    \left ( GL(W_1) \times GL(W_1^*) \times GL(W_2) \right ) \cap Sp(U) &= GL(W_1) \times GL(W_1^*) \times Sp(W_2),
\end{align*}
as Lemma \ref{lem:cap-Sp(U)} would suggest.
\end{example}

\begin{prop} \label{prop:dual-pairs-in-Sp}
Let $U$ be a finite-dimensional complex symplectic vector space of even dimension, and let $\mu$, $\nu$, and $\lambda$ be as in Lemma \ref{lem:cap-Sp(U)}. Then the dual pairs of $Sp(U)$ are exactly the pairs of groups of the form
$$\left ( \prod_{\mu \not \simeq \mu^*} GL(V_{\mu}) \prod_{\substack{\nu \simeq \nu^* \\ \text{orthog.}}} O(V_{\nu}) \prod_{\substack{\lambda \simeq \lambda^* \\ \text{sympl.}}} Sp(V_{\lambda}), \; \prod_{\mu \not \simeq \mu^*} GL(W_{\mu}) \prod_{\substack{\nu \simeq \nu^* \\ \text{orthog.}}} Sp(W_{\nu}) \prod_{\substack{\lambda \simeq \lambda^* \\ \text{sympl.}}} O(W_{\lambda})  \right ),$$
where 
$$U = \Bigg ( \bigoplus_{\mu \not \simeq \mu^*} ((V_{\mu} \otimes W_{\mu}) \oplus (V_{\mu}^* \otimes W_{\mu}^*)) \Bigg ) \oplus \Bigg ( \bigoplus_{ \substack{ \nu \simeq \nu^* \\ \text{orthog.} }} V_{\nu} \otimes W_{\nu} \Bigg ) \oplus \Bigg ( \bigoplus_{\substack{\lambda \simeq \lambda^* \\ \text{sympl.} }} V_{\lambda} \otimes W_{\lambda} \Bigg ) $$
is a vector space decomposition of $U$ with $\dim V_{\lambda}$ even and $\dim W_{\nu}$ even. 
\end{prop}

\begin{proof}
Let $\varphi \colon H \rightarrow Sp(U)$ be an algebraic symplectic representation with nonisomorphic irreducible subrepresentations $\{ (\gamma, V_\gamma) \}$. Set $W_{\gamma}:= \Hom_H(V_{\gamma},U)$. Write 
\begin{align*}
 G_1 &:= \prod_{\mu \not \simeq \mu^*} GL(V_{\mu}) \prod_{\substack{\nu \simeq \nu^* \\ \text{orthog.}}} O(V_{\nu}) \prod_{\substack{\lambda \simeq \lambda^* \\ \text{sympl.}}} Sp(V_{\lambda}), \text{ and } \\
 G_2 &:= \prod_{\mu \not \simeq \mu^*} GL(W_{\mu}) \prod_{\substack{\nu \simeq \nu^* \\ \text{orthog.}}} Sp(W_{\nu}) \prod_{\substack{\lambda \simeq \lambda^* \\ \text{sympl.}}} O(W_{\lambda}).   
\end{align*}

\noindent \textit{Step 1: $Z_{Sp(U)} (G_1) = G_2 = Sp(U)^{\varphi}$.} \\

Let $t \in Z_{Sp(U)}(G_1)$. By Lemma \ref{lem:cap-Sp(U)}, $\varphi (H) \subseteq G_1$, so we have that $t$ commutes with any element of $\varphi (H)$. In other words, $t$ is $H$-linear. Applying Schur's lemma in the same way as in Step 1 of the proof of Proposition \ref{prop:dual-pairs-GL} gives that $t \in \prod_{\gamma} GL(W_{\gamma}) \cap Sp(U) = G_2$, where $\gamma$ ranges over the nonisomorphic irreducible subrepresentations of $\varphi$, and where we have used Lemma \ref{lem:cap-Sp(U)}. It follows that $Z_{Sp(U)}(G_1) \subseteq G_2$. On the other hand, it is clear that $G_2 \subseteq Z_{Sp(U)}(G_1)$. It also follows from this argument that $G_2$ consists exactly of the $H$-linear elements of $Sp(U)$, completing Step 1.\\

\noindent \textit{Step 2: Each $W_{\gamma}$ is an irreducible representation of $G_2$ with multiplicity space $V_{\gamma}$.} \\

Consider the representation $\rho \colon G_2 \hookrightarrow GL(U)$. As in the proof of Proposition \ref{prop:dual-pairs-GL}, we will show that the $W_{\gamma}$'s are precisely the nonisomorphic irreducible subrepresentations of $\rho$, and that $W_{\gamma}$ has multiplicity space $V_{\gamma}$. To see this, we start by noting that each $W_{\gamma}$ is a subrepresentation of $U$, since $\rho(g)w \in W_{\gamma}$ for any $g \in G_2$ and $w \in W_{\gamma}$. Moreover, each $W_{\gamma}$ is irreducible by Lemma \ref{lem:standard-irreps}. We therefore obtain two decompositions of $U$, giving
$$U \simeq \bigoplus V_{\gamma} \otimes \Hom_H(V_{\gamma}, U) \simeq \bigoplus W_{\gamma} \otimes \Hom_{G_2} (W_{\gamma}, U).$$
Since $W_{\gamma} = \Hom_H(V_{\gamma}, U)$, this shows that $V_{\gamma} = \Hom_{G_2} (W_{\gamma}, U)$, completing Step 2. \\

\noindent \textit{Step 3: $Z_{Sp(U)} (G_2) = G_1$.} \\

Step 2 shows that we can repeat Step 1 with $G_2$ in place of $H$ and with the roles of $W_{\gamma}$ and $V_{\gamma}$ reversed. Doing so gives that $Z_{Sp(U)} (G_2) = G_1$, as desired. \\

\noindent \textit{Step 4: All dual pairs of $Sp(U)$ are of this form.} \\

It was shown in Step 1 that $G_2 = Sp(U)^{\varphi}$. Step 4 therefore follows from Remark \ref{rmk:all pairs}, completing the proof.
\end{proof}

\begin{thm} \label{thm:dual-pairs-in-Sp}
The dual pairs of $Sp(2n,\C)$ are exactly the pairs of groups of the form
\begin{equation*}
    \left ( \prod_{\mu} GL(k_{\mu},\C) \prod_{ \nu } O(k_{\nu},\C) \prod_{\lambda} Sp(2k_{\lambda},\C), \; \prod_{\mu} GL(\ell_{\mu},\C) \prod_{ \nu } Sp(2\ell_{\nu},\C) \prod_{\lambda} O(\ell_{\lambda},\C) \right ),
\end{equation*}
where $k_{\gamma} = 0$ if and only if $\ell_{\gamma} = 0$, and where $\sum_{\mu} k_{\mu} \ell_{\mu} + \sum_{\nu} k_{\nu}\ell_{\nu} + \sum_{\lambda} k_{\lambda} \ell_{\lambda} = n$.
\end{thm}

\begin{proof}
This follows from the proof of Proposition \ref{prop:dual-pairs-in-Sp} by taking 
$$H = \prod_{\mu} GL(V_{\mu}) \prod_{ \nu } O(V_{\nu}) \prod_{\lambda} Sp(V_{\lambda})$$ 
(with $\dim V_{\gamma} = k_{\gamma}$) and applying Lemma \ref{lem:standard-irreps}.
\end{proof}

\subsection{Dual pairs in \texorpdfstring{$PSp(2n,\C)$}{PSp(2n,C)}} \label{subsec:PSp}

Let $p\colon Sp(n,\C) \rightarrow PSp(n,\C)$ be the canonical projection.

\begin{prop} \label{prop:dual-pairs-in-PSp}
For $n$ even, define
$$\displaystyle \mathcal{S}(n,\C) := \left \langle Sp(n,\C), \; \begin{pmatrix} & I_{n} \\ -I_n & \end{pmatrix} \right \rangle \subseteq Sp(2n,\C).$$
Let $(G_1, G_2)$ be the $Sp(2n,\C)$ dual pair
\begin{equation*}
  \left ( \prod_{\mu} GL(k_{\mu},\C) \prod_{ \nu } O(k_{\nu},\C) \prod_{\lambda} Sp(2k_{\lambda},\C), \; \prod_{\mu} GL(\ell_{\mu},\C) \prod_{ \nu } Sp(2\ell_{\nu},\C) \prod_{\lambda} O(\ell_{\lambda},\C) \right ),
\end{equation*}
where $k_{\gamma} = 0$ if and only if $\ell_{\gamma} = 0$, and where $\sum_{\mu} k_{\mu} \ell_{\mu} + \sum_{\nu} k_{\nu}\ell_{\nu} + \sum_{\lambda} k_{\lambda} \ell_{\lambda} = n$. 
\begin{enumerate}[label = (\roman*)]
\item If $n$ is even, then $(p (O(2,\C)), p (\mathcal{S}(n,\C)))$ is a dual pair in $PSp( 2n,\C )$;
\item If $2 \not \in \{ k_{\nu} \}_{\nu} \cup \{ \ell_{\lambda} \}_{\lambda}$, then $(p(G_1), p(G_2))$ is a dual pair in $PSp(2n,\C)$;
\item If $\vert \{ \mu , \nu , \lambda \} \vert \geq 2$, then $(p(G_1), p(G_2))$ is a dual pair in $PSp(2n,\C)$.
\end{enumerate}
\end{prop}

\begin{proof}[Proof of (i)]
We first show $Z_{PSp(2n,\C)} (p(O(2,\C))) = p (\mathcal{S}(n,\C))$. To this end, recall that
$$O(2,\C) = \left \langle \begin{pmatrix} 1 & \\ & -1 \end{pmatrix} , \; \left \{ \begin{pmatrix} z_1 & z_2 \\ -z_2 & z_1 \end{pmatrix} \; : \; z_1^2 + z_2^2 =1 \right \} \right \rangle,$$
and let 
$$M \in p^{-1}(Z_{PSp(2n,\C)} (p (O(2,\C)))).$$ 
Then writing out the entry-wise implications of the relation
\begin{equation} \label{eq:plus-or-minus}
    M \begin{pmatrix} I_n & \\ & -I_n \end{pmatrix} = \pm \begin{pmatrix} I_n & \\ & -I_n \end{pmatrix} M
\end{equation}
gives that $M$ has the block form $\left ( \begin{smallmatrix} M_{11} & \\ & M_{22} \end{smallmatrix} \right )$ if the sign in (\ref{eq:plus-or-minus}) is positive, and has the $2 \times 2$ block form $\left ( \begin{smallmatrix} & M_{12} \\ M_{21} & \end{smallmatrix} \right )$ if the sign in (\ref{eq:plus-or-minus}) is negative (where $M_{11}, M_{12}, M_{21}, M_{22}$ are $n \times n$ matrices). Subsequently writing out the entry-wise implications of the relation
\begin{equation} \label{eq:plus-or-minus2}
    M \begin{pmatrix} z_1 I_n & z_2 I_n \\ - z_2 I_n & z_1 I_n \end{pmatrix} = \pm \begin{pmatrix} z_1 I_n & z_2 I_n \\ - z_2 I_n & z_1 I_n \end{pmatrix} M
\end{equation}
further gives that $M$ has the block form $\left ( \begin{smallmatrix} M_{11} & \\ & M_{11} \end{smallmatrix} \right )$ if the signs in (\ref{eq:plus-or-minus}) and (\ref{eq:plus-or-minus2}) are both positive, and that $M$ has the block form $\left ( \begin{smallmatrix} & M_{12} \\ -M_{12} & \end{smallmatrix} \right )$ if the signs in (\ref{eq:plus-or-minus}) and (\ref{eq:plus-or-minus2}) are negative and positive, respectively. Moreover, we see that the remaining sign combinations are impossible. Finally, requiring that $M^t \Omega M = \Omega$ shows that $M_{11}$ and $M_{12}$ are in $Sp(n,\C)$. Consequently, 
$$Z_{PSp(2n,\C)}(p (O(2,\C))) \subseteq p (\mathcal{S}(n,\C)).$$ 
On the other hand, it is straightforward to check containment in the other direction. It follows that $Z_{PSp(2n,\C)}(p (O(2,\C))) = p (\mathcal{S}(n,\C))$, as desired.

We have left to show that $Z_{PSp(2n,\C)} (p (\mathcal{S}(n,\C))) = p (O(2,\C))$. To this end, let $N \in p^{-1}(Z_{PSp(2n,\C)}(p(\mathcal{S}(n,\C))))$. Then, in particular,
\begin{equation} \label{eq:plus-or-minus3}
    N \begin{pmatrix} A & \\ & A \end{pmatrix} = \pm \begin{pmatrix} A & \\ & A \end{pmatrix} N \hspace{.25cm} \text{ for all } A \in Sp(n,\C).
\end{equation}
Writing out the entry-wise implications of (\ref{eq:plus-or-minus3}) gives that $N$ is of the form $N = \left ( \begin{smallmatrix} aI_n & bI_n \\ cI_n & dI_n \end{smallmatrix} \right )$ for some $a,b,c,d \in \mathbb{C}$. The relation $N^t \Omega N = \Omega$ further gives that $N\in O(2,\C)$, and hence that
$$Z_{PSp(2n,\C)}(p(\mathcal{S}(n,\C))) \subseteq p (O(2,\C)).$$ 
On the other hand, it is straightforward to check that both $\left ( \begin{smallmatrix} I_n & \\ & -I_n \end{smallmatrix} \right )$ and $\left ( \begin{smallmatrix} z_1I_n & z_2 I_n \\ -z_2I_n & z_1I_n \end{smallmatrix} \right )$ commute with $\left ( \begin{smallmatrix} & I_n \\ -I_n &  \end{smallmatrix} \right )$ (up to $\pm 1$). Therefore, $Z_{PSp(2n,\C)}(p (\mathcal{S}(n,\C))) = p (O(2,\C))$. 
\end{proof}

\begin{proof}[Proof of (ii)]
Suppose that $2 \not \in \{ k_{\nu} \}_{\nu} \cup \{ \ell_{\lambda} \}_{\lambda}$. Then Proposition \ref{prop:connected-dp-descend} and Lemma \ref{lem:standard-irreps} give that $(p(G_1), p(G_2))$ is a dual pair.
\end{proof}

\begin{proof}[Proof of (iii)]
Suppose that $\vert \{ \mu , \nu , \lambda \} \vert \geq 2$. It is clear that 
$$p(G_2) \subseteq Z_{PSp(2n,\C)}(p(G_1)) \hspace{.25cm} \text{ and } \hspace{.25cm} Z_{PSp(2n,\C)}( p(G_1)) \subseteq Z_{PSp(2n,\C)}( p(G_1^{\circ})).$$ 
Write 
$$G_1 = \prod_{\substack{ \nu \text{ with}\\ k_{\nu}=2}} O(k_{\nu},\C) \prod_{\substack{ \nu \text{ with} \\ k_{\nu} \neq 2}} O(k_{\nu},\C) \prod_{\mu} GL(k_{\mu},\C) \prod_{\lambda} Sp(2k_{\lambda},\C).$$
Applying part $(i)$, we see that
\begin{align*}
Z_{PSp(2n,\C)}( p(G_1^{\circ})) & \subseteq p \left (\prod_{\substack{ \nu \text{ with}\\ k_{\nu}=2}} \mathcal{S}(2\ell_{\nu},\C) \prod_{\substack{\nu \text{ with}\\ k_{\nu} \neq 2}} Sp(2\ell_{\nu},\C) \prod_{\mu} GL(\ell_{\mu},\C) \prod_{\lambda} O(\ell_{\lambda},\C) \right ).
\end{align*}
However, note that the commutator of $\left ( \begin{smallmatrix} & I_{2\ell_{\nu}} \\ -I_{2\ell_{\nu}} \end{smallmatrix} \right ) \in \mathcal{S}(2\ell_{\nu},\C)$ with certain elements of $O(2,\C)$ equals $-1$. Therefore, if $\vert \{ \mu, \nu , \lambda \} \vert \geq 2$, then for any element 
$$\diag \left ( \left ( \begin{smallmatrix} & I_{2\ell_{\nu}} \\ -I_{2\ell_{\nu}} \end{smallmatrix} \right ), X \right ) \in \prod_{\substack{ \nu \text{ with}\\ k_{\nu}=2}} \mathcal{S}(2\ell_{\nu},\C) \prod_{\substack{\nu \text{ with}\\ k_{\nu} \neq 2}} Sp(2\ell_{\nu},\C) \prod_{\mu} GL(\ell_{\mu},\C)  \prod_{\lambda} O(\ell_{\lambda},\C),$$
there exists $\diag (g,X') \in G_1$ with $g \in O(2,\C)$ such that 
$$ \begin{pmatrix} \begin{smallmatrix} & I_{2\ell_{\nu}} \\ -I_{2\ell_{\nu}} \end{smallmatrix} & \\ & X \end{pmatrix} \begin{pmatrix} g & \\ & X' \end{pmatrix} \begin{pmatrix} \begin{smallmatrix} & I_{2\ell_{\nu}} \\ -I_{2\ell_{\nu}} \end{smallmatrix} & \\ & X \end{pmatrix}^{-1} \begin{pmatrix} g & \\ & X' \end{pmatrix}^{-1} = \begin{pmatrix} -1 & \\ & 1 \end{pmatrix} .$$
It follows that 
$$Z_{PSp(2n,\C)} ( p(G_1) ) \subseteq  p \left ( \prod_{\nu} Sp(2\ell_{\nu},\C) \prod_{\mu} GL(\ell_{\mu},\C) \prod_{\lambda} O(\ell_{\lambda},\C) \right ) = p(G_2) ,$$
and hence that $Z_{PSp(2n,\C)}(p(G_1)) = p(G_2)$. The equality $Z_{PSp(2n,\C)} (p(G_2)) = p(G_1)$ follows in the same way, completing the proof.
\end{proof}

Classifying the remaining dual pairs in $PSp(2n,\C)$ appears to be a difficult question (related to the problem of classifying the disconnected dual pairs in $PGL(2n,\C)$) and is beyond the scope of this paper.

\subsection{Dual pairs in \texorpdfstring{$\mf{sp}(2n,\C)$}{sp(2n,C)}} \label{subsec:sp-LA}

\begin{thm} \label{thm:dual-pairs-in-sp-LA}
The dual pairs of $\mf{sp}(2n,\C)$ are exactly the pairs of subalgebras of the form $(\mf{a},\mf{b})$ with
\begin{align*}
\mf{a} &= \biggl ( \bigoplus_{\mu} \mf{gl} (k_{\mu},\C) \biggr ) \oplus \biggl ( \bigoplus_{ \nu } \mf{so} (k_{\nu},\C) \biggr ) \oplus \biggl ( \bigoplus_{\lambda} \mf{sp}(2 k_{\lambda},\C) \biggr ) , \\
\mf{b} &= \biggl ( \bigoplus_{\mu} \mf{gl} (\ell_{\mu},\C ) \biggr ) \oplus \biggl ( \bigoplus_{ \nu } \mf{sp} (2 \ell_{\nu},\C) \biggr ) \oplus \biggl ( \bigoplus_{\lambda} \mf{so}(\ell_{\lambda},\C) \biggr ),
\end{align*}
where $k_{\gamma} = 0$ if and only if $\ell_{\gamma} = 0$, where $n=\sum_{\mu} k_{\mu}\ell_{\mu} + \sum_{\nu} k_{\nu} \ell_{\nu} + \sum_{\lambda} k_{\lambda} \ell_{\lambda}$, and where $2 \not \in \{ k_{\nu} \}_{\nu} \cup \{ \ell_{\lambda} \}_{\lambda}$.
\end{thm}

\begin{proof}
We start by showing that the pairs of subalgebras of the form $(\mf{a}, \mf{b})$ described in the proposition statement are indeed dual pairs. To this end, suppose that $(G_1, G_2)$ is a dual pair in $Sp(2n,\C)$ with $2 \not \in \{ k_{\nu} \}_{\nu} \cup \{ \ell_{\lambda} \}_{\lambda}$ (using the notation of Theorem \ref{thm:dual-pairs-in-Sp}). Set $\mf{g}_i := \Lie (G_i)$. We claim that $Z_{Sp(2n,\C)}(\langle \exp \mf{g}_i \rangle ) = Z_{Sp(2n,\C)} (G_i)$. To see this, recall first that $\exp \mf{gl}(k_{\mu},\C) = GL(k_{\mu},\C)$. Next, since $SO(k_{\nu},\C)$ and $Sp(2k_{\lambda},\C)$ are connected, \cite[Theorem 4.6(c)]{sepanski} gives that $\langle \exp \mf{so} (k_{\nu},\C) \rangle = SO(k_{\nu},\C)$ and that $\langle \exp \mf{sp}(2k_{\lambda},\C) \rangle = Sp(2k_{\lambda},\C)$. Finally, since the standard representations of $SO(k_{\nu},\C)$ and $O(k_{\nu},\C)$ are both irreducible for $k_{\nu}\neq 2$ (by Lemma \ref{lem:standard-irreps}), we have that $Z_{Sp(2n,\C)} ( SO(k_{\nu},\C) ) = Z_{Sp(2n,\C)} (O(k_{\nu},\C) )$. Therefore, Proposition \ref{prop:1} and Theorem \ref{thm:dual-pairs-in-Sp} give that the pairs of subalgebras of the form $(\mf{a}, \mf{b})$ are dual pairs.

Conversely, suppose that $(\mf{g}_1, \mf{g}_2)$ is a dual pair in $\mf{sp}(2n,\C)$. Set $G_2 = Z_{Sp(2n,\C)} ( \langle \exp \mf{g}_1 \rangle  )$, and $G_1 = Z_{Sp(2n,\C)}(G_2)$. Then by Proposition \ref{prop:2}, $(G_1, G_2)$ is a dual pair in $Sp(2n,\C)$ with $\Lie (G_1) = \mf{g}_1$ and $\Lie (G_2) = \mf{g}_2$. Therefore, $G_1$ and $G_2$ can be written as
\begin{align*}
G_1 &= \prod_{\mu} GL(k_{\mu},\C) \prod_{\nu} O(k_{\nu},\C) \prod_{\lambda} Sp(2k_{\lambda},\C),  \\
G_2 &= \prod_{\mu} GL(\ell_{\mu},\C) \prod_{\nu} Sp(2\ell_{\nu},\C) \prod_{\lambda} O(\ell_{\lambda},\C) ,
\end{align*}
and $\mf{g}_1$ and $\mf{g}_2$ can be written as 
\begin{align*}
\mf{g}_1 &= \biggl( \bigoplus_{\mu} \mf{gl} (k_{\mu},\C) \biggr) \oplus \biggl ( \bigoplus_{ \nu } \mf{so} (k_{\nu},\C) \biggr) \oplus \biggl( \bigoplus_{\lambda} \mf{sp}(2 k_{\lambda},\C) \biggr) , \\
\mf{g}_2 &= \biggl( \bigoplus_{\mu} \mf{gl} (\ell_{\mu},\C ) \biggr) \oplus \biggl( \bigoplus_{ \nu } \mf{sp} (2 \ell_{\nu},\C) \biggr) \oplus \biggl( \bigoplus_{\lambda} \mf{so}(\ell_{\lambda},\C) \biggr),
\end{align*}
where $k_{\gamma} = 0$ if and only if $\ell_{\gamma} = 0$, and where $\sum_{\mu} k_{\mu}\ell_{\mu} + \sum_{\nu} k_{\nu} \ell_{\nu} + \sum_{\lambda} k_{\lambda} \ell_{\lambda} = n$. It therefore remains to show that $2 \not \in \{ k_{\nu} \}_{\nu} \cup \{ \ell_{\nu} \}_{\nu}$. We have that
$$ \langle \exp \mf{g}_1 \rangle = \prod_{\mu} GL(k_{\mu},\C) \prod_{\nu} SO(k_{\nu},\C) \prod_{\lambda} Sp(2k_{\lambda},\C).$$ 
Suppose that $k_{\nu_0} = 2$. Then Lemma \ref{lem:standard-irreps} does not apply, as the standard representation of $SO(2,\C)$ is not irreducible. In fact, we claim that $Z_{Sp(2k_{\nu_0}\ell_{\nu_0},\C)} ( SO(k_{\nu_0},\C) ) \supsetneq Sp(2\ell_{\nu_0},\C)$. By Theorem \ref{thm:dual-pairs-in-Sp}, 
$$Z_{Sp(2k_{\nu_0}\ell_{\nu_0},\C)} ( SO(k_{\nu_0},\C) ) \supseteq Z_{Sp(2k_{\nu_0}\ell_{\nu_0},\C)} ( O(k_{\nu_0},\C) ) = Sp(2\ell_{\nu_0},\C),$$
where $Sp(2\ell_{\nu_0},\C)$ is embedded in $Sp(2k_{\nu_0}\ell_{\nu_0},\C)$ as 
$$\left \{ \begin{pmatrix} X & \\ & X \end{pmatrix} \; \Big \vert \; X \in Sp(2\ell_{\nu_0},\C)  \right \}.$$
However, $Z_{Sp(2k_{\nu_0}\ell_{\nu_0},\C)} ( SO(k_{\nu_0},\C) )$ contains
$$\left \{ \begin{pmatrix} & Y \\ -Y & \end{pmatrix} \; \Big \vert \; Y \in Sp(2\ell_{\nu_0},\C) \right \}.$$ 
In this way, we see that
$$Z_{Sp(2n,\C)} (  \langle \exp \mf{g}_1 \rangle  ) \supsetneq G_2,$$
which contradicts Proposition \ref{prop:2}. It follows that $2 \not \in \{ k_{\nu} \}_{\nu} \cup \{ \ell_{\nu} \}_{\nu}$, completing the proof.
\end{proof}

\section{Dual pairs in \texorpdfstring{$O(n,\C)$}{O(n,C)}, \texorpdfstring{$SO(n,\C)$}{SO(n,C)}, \texorpdfstring{$PO(n,\C)$}{PO(n,C)}, \texorpdfstring{$PSO(n,\C)$}{PSO(n,C)}, and \texorpdfstring{$\mf{so}(n,\C)$}{so(n,C)} } \label{sec:o}

\subsection{Dual pairs in \texorpdfstring{$O(n,\C)$}{O(n,C)}} \label{subsec:O}

The classification of dual pairs in $O(n,\C)$ follows from an analysis quite similar to that for $Sp(2n,\C)$ in Section \ref{sec:sp}. As a result, we omit some of the proofs in this section. 

\begin{thm} \label{thm:dual-pairs-in-O}
The dual pairs of $O(n,\C)$ are exactly the pairs of groups of the form
\begin{equation*}
    \left ( \prod_{\mu} GL(k_{\mu},\C) \prod_{ \nu } O(k_{\nu},\C) \prod_{\lambda} Sp(2k_{\lambda},\C), \; \prod_{\mu} GL(\ell_{\mu},\C) \prod_{ \nu } O(\ell_{\nu},\C) \prod_{\lambda} Sp(2\ell_{\lambda},\C) \right ),
\end{equation*}
where $k_{\gamma} = 0$ if and only if $\ell_{\gamma} = 0$, and where $2\sum_{\mu} k_{\mu} \ell_{\mu} + \sum_{\nu} k_{\nu}\ell_{\nu} + 4\sum_{\lambda} k_{\lambda} \ell_{\lambda} = n$.
\end{thm}

\begin{proof}
     This follows from similar arguments as in the proof of Theorem \ref{thm:dual-pairs-in-Sp}.
\end{proof}

\subsection{Dual pairs in \texorpdfstring{$SO(n,\C)$}{SO(n,C)}} \label{subsec:SO}

In this subsection, we classify the dual pairs in $SO(n,\C)$ and in $\mf{so}(n,\C)$, both of which will depend on the parity of $n$.

\begin{thm} 
When $n$ is odd, the dual pairs of $SO(n,\C)$ are exactly the pairs of groups 
\begin{align*}
    A&= \left ( \prod_{\mu} GL(k_{\mu},\C)  \prod_{ \nu } O(k_{\nu},\C) \prod_{\lambda} Sp(2k_{\lambda},\C) \right  ) \cap SO(n,\C) ,\\
    B&= \left ( \prod_{\mu} GL(\ell_{\mu},\C)  \prod_{ \nu } O(\ell_{\nu},\C) \prod_{\lambda} Sp(2\ell_{\lambda},\C) \right  ) \cap SO(n,\C),
\end{align*}
where $k_{\gamma} = 0$ if and only if $\ell_{\gamma} = 0$, and where $n=2\sum_{\mu} k_{\mu}\ell_{\mu} + \sum_{\nu} k_{\nu} \ell_{\nu} + 4\sum_{\lambda} k_{\lambda} \ell_{\lambda}$. In other words, when $n$ is odd, the dual pairs of $SO(n,\mathbb{C})$ are in bijection with the dual pairs of $O(n,\mathbb{C})$. 
\end{thm}

\begin{proof}
This follows from Theorem \ref{thm:subgroup-bijection} and the observation that $O(n,\mathbb{C}) = Z \cdot SO(n,\mathbb{C})$ when $n$ is odd, where $Z$ is the center of $O(n,\mathbb{C})$ (i.e. $Z = \{ \pm I_n \}$). First, recall that any orthogonal matrix has determinant $\pm 1$. Given a matrix $g \in O(n,\C)$, if $\det g = 1$, then there is nothing to show. On the other hand, if $\det g = -1$, then we can write $g = (-I_n)g(-I_n)$; since $n$ is odd, we see that $g(-I_n) \in SO(n,\C)$, and this completes the proof.
\end{proof}

\begin{thm} \label{thm:SO-even}
When $n$ is even, the dual pairs of $SO(n,\C)$ are exactly the pairs of groups 
\begin{align*}
    A&= \left ( \prod_{\mu} GL(k_{\mu},\C)  \prod_{ \nu } O(k_{\nu},\C) \prod_{\lambda} Sp(2k_{\lambda},\C) \right  ) \cap SO(n,\C) ,\\
    B&= \left ( \prod_{\mu} GL(\ell_{\mu},\C)  \prod_{ \nu } O(\ell_{\nu},\C) \prod_{\lambda} Sp(2\ell_{\lambda},\C) \right  ) \cap SO(n,\C),
\end{align*}
where $k_{\gamma} = 0$ if and only if $\ell_{\gamma} = 0$, where $n=2\sum_{\mu} k_{\mu}\ell_{\mu} + \sum_{\nu} k_{\nu} \ell_{\nu} + 4\sum_{\lambda} k_{\lambda} \ell_{\lambda}$, and where either $2 \not \in \{ k_{\nu} \}_{\nu} \cup \{ \ell_{\nu} \}_{\nu}$ or $\vert \{ \nu \} \vert \geq 2$. 
\end{thm}

\begin{proof}
In this proof, we drop the $\C$'s when describing the complex classical groups so as to make the expressions more readable. By Remark \ref{rmk:all pairs}, the dual pairs in $SO(n)$ are exactly the pairs of the form 
$$( Z_{SO(n)} (S) , \; Z_{SO(n)} ( Z_{SO(n)} (S) ) ),$$
where $S \subseteq SO(n)$. By Theorem \ref{thm:dual-pairs-in-O}, $Z_{SO(n)} (S)$ is of the form 
\begin{align*}
    Z_{SO(n)} (S) &= Z_{O(n)} ( S ) \cap SO(n)  \\
    &= \left ( \prod_{\mu} GL(k_{\mu}) \prod_{ \nu } O(k_{\nu}) \prod_{\lambda} Sp(2k_{\lambda}) \right ) \cap SO(n) \\
    &= \prod_{\mu} GL(k_{\mu}) \times \left ( \prod_{ \nu } O(k_{\nu}) \cap SO(\textstyle \sum_\nu k_\nu \ell_\nu ) \right ) \times \prod_{\lambda} Sp(2k_{\lambda})
\end{align*}
for some $\{ k_\gamma, \ell_\gamma \}_\gamma$ such that $n=2\sum_{\mu} k_{\mu}\ell_{\mu} + \sum_{\nu} k_{\nu} \ell_{\nu} + 4\sum_{\lambda} k_{\lambda} \ell_{\lambda}$ and such that $k_{\gamma} = 0$ if and only if $\ell_{\gamma} = 0$. For notational convenience, set $m=\sum_\nu k_\nu \ell_\nu$. Note that if $2\in \{ k_{\nu} \}_{\nu}$ and $\vert \{ \nu \} \vert =1$, then $\prod_\nu O(k_\nu) \cap SO(m)$ becomes $SO(2) \simeq GL(1)$, which can be analyzed along side the other $GL$ factors. We can therefore assume that we don't have both $2\in \{ k_{\nu} \}_{\nu}$ and $\vert \{ \nu \} \vert =1$. Now, note that 
$$    Z_{SO(n)} (  Z_{SO(n)} (S) ) = \prod_\mu GL(\ell_\mu) \times Z_{SO(m)} \left ( \prod_{ \nu } O(k_{\nu}) \cap SO(m) \right ) \times \prod_\lambda Sp(2\ell_\lambda)$$
and that
$$Z_{SO(m)} \left ( \prod_{ \nu } O(k_{\nu}) \cap SO(m) \right ) = Z_{O(m)} \left ( \prod_{ \nu } O(k_{\nu}) \cap SO(m) \right ) \cap SO(m).$$
Moreover, we have that
\begin{align*}
    Z_{O(m)} \left ( \prod_{ \nu } O(k_{\nu}) \cap SO(m) \right ) &= Z_{O(m)} \left ( \left ( \prod_{ \nu \colon k_\nu = 2 } O(2) \times \prod_{ \nu \colon k_\nu \neq 2 } O(k_\nu) \right ) \cap SO(m) \right ) \\
    &\subseteq Z_{O(m)} \left ( \prod_{ \nu \colon k_\nu = 2 } SO(2) \times \prod_{ \nu \colon k_\nu \neq 2 } SO(k_\nu) \right )\\
    &\simeq Z_{O(m)} \left ( \prod_{ \nu \colon k_\nu = 2 } GL(1) \times \prod_{ \nu \colon k_\nu \neq 2 } SO(k_\nu) \right ) \\
    &= \prod_{ \nu \colon k_\nu = 2 } GL(\ell_\nu) \times \prod_{ \nu \colon k_\nu \neq 2 } O(\ell_\nu).
\end{align*}
Now, since we are assuming that either $2\not \in \{k_\nu \}_\nu$ or $\vert \{ \nu \} \vert \geq 2$, we get from the above that
\begin{equation} \label{eq:O(m)-centralizer}
    Z_{O(m)} \left ( \prod_{ \nu } O(k_{\nu}) \cap SO(m) \right ) = \prod_{ \nu } O(\ell_{\nu}).
\end{equation}
When $2\not \in \{k_\nu \}_\nu$, this is clear. To see why this is true when $2 \in \{k_\nu \}_\nu$ and $\vert \{ \nu \} \vert \geq 2$, note that under the change of basis giving $SO(2) \simeq GL(1)$, we have that $SO(2)$, $O(2)$, and $GL(\ell_\nu)$ embed into $O(2\ell_\nu)$ as 
\begin{align*}
    SO(2) &= \left \{ \begin{pmatrix} x \cdot I_{\ell_\nu} &\\& x^{-1} \cdot I_{\ell_\nu} \end{pmatrix} \; : \; x \in \C^{\times} \right \}, \\
    O(2) &= \left \{ \begin{pmatrix} x \cdot I_{\ell_\nu} &\\& x^{-1} \cdot I_{\ell_\nu} \end{pmatrix} \; : \; x \in \C^{\times} \right \} \cup \left \{ \begin{pmatrix} & y \cdot I_{\ell_\nu} \\ y^{-1} \cdot I_{\ell_\nu} & \end{pmatrix} \; : \; y \in \C^{\times} \right \} ,\\
    GL(\ell_\nu) &= \left \{ \begin{pmatrix} g & \\ & (g^t)^{-1} \end{pmatrix} \; : \; g \in GL(\ell_\nu) \right \},
\end{align*}
where the orthogonal form on $O(2\ell_\nu)$ is $\left ( \begin{smallmatrix} & I_{\ell_\nu} \\ I_{\ell_\nu} & \end{smallmatrix} \right )$. Now, when $2 \in \{k_\nu \}_\nu$ and $\vert \{ \nu \} \vert \geq 2$, for any $\nu$ with $k_\nu = 2$ there exists an element of $\prod_{ \nu } O(k_{\nu}) \cap SO(m)$ with $\nu$-factor of the form $\left ( \begin{smallmatrix} & y \cdot I_{\ell_\nu} \\ y^{-1} \cdot I_{\ell_\nu} \end{smallmatrix} \right )$. With this in mind, it is not hard to verify that $\left ( \begin{smallmatrix} g & \\ & (g^t)^{-1} \end{smallmatrix} \right ) \in GL(\ell_\nu)$ centralizes all of the possible $\nu$-blocks if and only if $g \in O(\ell_\nu)$. This in turn verifies \eqref{eq:O(m)-centralizer}. 

It follows that $ Z_{SO(n)} (  Z_{SO(n)} (S) )$ is of the form  
\begin{equation} \label{eq:SO(n)-centralizer}
\left ( \prod_{\mu} GL(\ell_{\mu})  \prod_{ \nu } O(\ell_{\nu}) \prod_{\lambda} Sp(2\ell_{\lambda}) \right  ) \cap SO(n)   
\end{equation}
whenever $2 \not \in \{k_\nu \}_\nu$ or $\vert \{ \nu \} \vert \geq 2$. Finally, one can re-trace the above argument to check whether \eqref{eq:SO(n)-centralizer} is indeed a member of a dual pair. When doing so, we gain the requirement that either $2\not \in \{\ell_\nu \}_\nu$ or $\vert \{ \nu \} \vert \geq 2$.
\end{proof}

\begin{remark}
    Note that when $n=2\sum_{\mu} k_{\mu}\ell_{\mu} + \sum_{\nu} k_{\nu} \ell_{\nu} + 4\sum_{\lambda} k_{\lambda} \ell_{\lambda}$ is odd, the condition that either $2 \not \in \{k_\nu\}_\nu \cup \{\ell_\nu \}_\nu$ or $\vert \{\nu \}\vert \geq 2$ is automatically satisfied. (Thus, the statement of Theorem \ref{thm:SO-even} actually holds for both $n$ odd and $n$ even.)
\end{remark}

\subsection{Dual pairs in \texorpdfstring{$PO(n,\C)$}{PO(n,C)}} \label{subsec:PO}

Let $p\colon O(n,\C) \rightarrow PO(n,\C)$ be the canonical projection.

\begin{prop}
Define
$$\displaystyle \mathcal{O}(m,\C) := \left \langle O(m,\C), \; \begin{pmatrix} & I_{m} \\ -I_m & \end{pmatrix} \right \rangle \subseteq O(2m,\C).$$
Let $(G_1, G_2)$ be the $O(n,\C)$ dual pair
\begin{equation*}
  \left ( \prod_{\mu} GL(k_{\mu},\C) \prod_{ \nu } O(k_{\nu},\C) \prod_{\lambda} Sp(2k_{\lambda},\C), \; \prod_{\mu} GL(\ell_{\mu},\C) \prod_{ \nu } O(\ell_{\nu},\C) \prod_{\lambda} Sp(2\ell_{\lambda},\C) \right ),
\end{equation*}
where $k_{\gamma} = 0$ if and only if $\ell_{\gamma} = 0$, and where $2\sum_{\mu} k_{\mu} \ell_{\mu} + \sum_{\nu} k_{\nu}\ell_{\nu} + 4\sum_{\lambda} k_{\lambda} \ell_{\lambda} = n$. 
\begin{enumerate}[label = (\roman*)]
\item If $n$ is even, then $(p (O(2,\C)), p (\mathcal{O}(\frac{n}{2},\C)))$ is a dual pair in $PO( n,\C )$;
\item If $2 \not \in \{ k_{\nu} \}_{\nu} \cup \{ \ell_{\nu} \}_{\nu}$, then $(p(G_1), p(G_2))$ is a dual pair in $PO(n,\C)$;
\item If $\vert \{ \mu , \nu , \lambda \} \vert \geq 2$, then $(p(G_1), p(G_2))$ is a dual pair in $PO(n,\C)$.
\end{enumerate}
\end{prop}

\begin{proof}
    This follows from similar arguments as in the proof of Proposition \ref{prop:dual-pairs-in-PSp}.
\end{proof}

Classifying the remaining dual pairs in $PO(n,\C)$ appears to be a difficult question (related to the problem of classifying the disconnected dual pairs in $PGL(n,\C)$) and is beyond the scope of this paper.

\subsection{Dual pairs in \texorpdfstring{$PSO(n,\C)$}{PSO(n,C)}} \label{subsec:PSO}

Let $p\colon SO(n,\C) \rightarrow PSO(n,\C)$ be the canonical projection.

\begin{prop} \label{prop:PSO}
    Let $(G_1, G_2)$ be the $SO(n,\C)$ dual pair 
    \begin{align*}
    G_1&= \left ( \prod_{\mu} GL(k_{\mu},\C)  \prod_{ \nu } O(k_{\nu},\C) \prod_{\lambda} Sp(2k_{\lambda},\C) \right  ) \cap SO(n,\C) ,\\
    G_2&= \left ( \prod_{\mu} GL(\ell_{\mu},\C)  \prod_{ \nu } O(\ell_{\nu},\C) \prod_{\lambda} Sp(2\ell_{\lambda},\C) \right  ) \cap SO(n,\C),
\end{align*}
where $k_{\gamma} = 0$ if and only if $\ell_{\gamma} = 0$, where $n=2\sum_{\mu} k_{\mu}\ell_{\mu} + \sum_{\nu} k_{\nu} \ell_{\nu} + 4\sum_{\lambda} k_{\lambda} \ell_{\lambda}$, and where either $2 \not \in \{ k_{\nu} \}_{\nu} \cup \{ \ell_{\nu} \}_{\nu}$ or $\vert \{ \nu \} \vert \geq 2$. Then $(p(G_1),p(G_2))$ is a dual pair in $PSO(n,\C)$
\end{prop}

\begin{proof}
    This follows from similar arguments as in the proof of Proposition \ref{prop:dual-pairs-in-PSp}.
\end{proof}

\begin{remark}
     Note that if $n$ is odd, then $Z(SO(n,\C)) =\{ 1 \}$, meaning $PSO(n,\C) \simeq SO(n,\C) $. Therefore, when $n$ is odd, Proposition \ref{prop:PSO} gives a complete classification of $PSO(n,\C)$ dual pairs. However, when $n$ is even, classifying the remaining dual pairs in $PSO(n,\C)$ again appears to be a difficult question, and is beyond the scope of this paper.
\end{remark}

\subsection{Dual pairs in \texorpdfstring{$\mf{so}(n,\C)$}{so(n,C)}} \label{subsec:so-LA}

Recall that $O(n,\C)$ and $SO(n,\C)$ have the same Lie algebra (i.e.~$\mf{o}(n,\C)=\mf{so}(n,\C)$).

\begin{thm}
The dual pairs of $\mf{so}(n,\C)$ are exactly the pairs of subalgebras of the form $(\mf{a},\mf{b})$ with
\begin{align*}
\mf{a} &= \biggl( \bigoplus_{\mu} \mf{gl} (k_{\mu},\C) \biggr) \oplus \biggl( \bigoplus_{ \nu } \mf{so} (k_{\nu},\C) \biggr) \oplus \biggl( \bigoplus_{\lambda} \mf{sp}(2 k_{\lambda},\C) \biggr) , \\
\mf{b} &= \biggl( \bigoplus_{\mu} \mf{gl} (\ell_{\mu},\C ) \biggr) \oplus \biggl( \bigoplus_{ \nu } \mf{so} ( \ell_{\nu},\C) \biggr) \oplus \biggl( \bigoplus_{\lambda} \mf{sp}(2\ell_{\lambda},\C) \biggr),
\end{align*}
where $k_{\gamma} = 0$ if and only if $\ell_{\gamma} = 0$, where $n=2\sum_{\mu} k_{\mu}\ell_{\mu} + \sum_{\nu} k_{\nu} \ell_{\nu} + 4\sum_{\lambda} k_{\lambda} \ell_{\lambda}$, and where $2 \not \in \{ k_{\nu} \}_{\nu} \cup \{ \ell_{\nu} \}_{\nu}$. 
\end{thm}

\begin{proof}
    This follows from similar arguments as in the proof of Theorem \ref{thm:dual-pairs-in-sp-LA}.
\end{proof}

\section*{Acknowledgements}

The author would like to thank David Vogan for suggesting this topic of study and for his guidance throughout the project. She would also like to thank Calder Morton-Ferguson for his helpful comments.

\bibliography{biblio} 

\begin{thebibliography}{10}

\bibitem{allison}
B.N. Allison.
\newblock Structurable algebras.
\newblock {\em Math. Ann.}, 237(2):133--156, 1978.

\bibitem{barton-sudbery}
C.H. Barton and A.~Sudbery.
\newblock Magic squares and matrix models of {L}ie algebras.
\newblock {\em Adv. Math.}, 180(2):596--647, 2003.

\bibitem{Ginzburg}
V.~Ginzburg.
\newblock Principal nilpotent pairs in a semisimple {L}ie algebra. {I}.
\newblock {\em Invent. Math.}, 140(3):511--561, 2000.

\bibitem{Symmetry}
R.~Goodman and N.R. Wallach.
\newblock {\em Symmetry, Representations, and Invariants}.
\newblock Graduate Texts in Mathematics. Springer New York, 2009.

\bibitem{Savin4}
B.~Gross and G.~Savin.
\newblock The dual pair {$PGL_3 \times G_2$}.
\newblock {\em Canad. Math. Bull.}, 40(3):376--384, 1997.

\bibitem{HoweRemarks}
R.E. Howe.
\newblock Remarks on classical invariant theory.
\newblock {\em Trans. Amer. Math. Soc.}, 313:539--570, 1989.

\bibitem{Savin2}
J.-S. Huang, P.~Pand{\v z}i{\'c}, and G.~Savin.
\newblock New dual pair correspondences.
\newblock {\em Duke Math. J}, 82(2):447--471, 1996.

\bibitem{KovT}
D.~Kova{\v c}evi{\'c}.
\newblock Exceptional dual pair correspondences.
\newblock {\em Thesis (Ph.D.) -- The University of Utah, \emph{ISBN:
  978-1109-90760-5}}, 2007.

\bibitem{Kov}
D.~Kova{\v c}evi{\'c}.
\newblock Real forms of dual pairs $\mathfrak{g}_2 \times \mathfrak{h}$ in
  $\mathfrak{g}$ of type {$E_6$}, {$E_7$} and {$E_8$}.
\newblock {\em J. Lie Theory}, 21(2):417--426, 2011.

\bibitem{Savin3}
K.~Magaard and G.~Savin.
\newblock Exceptional $\theta$-correspondences. {I}.
\newblock {\em Compos. Math.}, 107(1):89--123, 1997.

\bibitem{Panyushev}
D.I. Panyushev.
\newblock Nilpotent pairs, dual pairs, and sheets.
\newblock {\em J. Algebra}, 240(2):635--664, 2001.

\bibitem{rubenthaler}
H.~Rubenthaler and Soci{\'e}t{\'e} math{\'e}matique~de France.
\newblock {\em Les paires duales dans les alg{\`e}bres de Lie r{\'e}ductives},
  volume 219 of {\em Ast{\'e}risque}.
\newblock Societes math{\'e}matique de France, 1994.

\bibitem{Savin1}
G.~Savin and M.~Woodbury.
\newblock Matching of {H}ecke operators for exceptional dual pair
  correspondences.
\newblock {\em J. Number Theory}, 146:534--556, 2015.

\bibitem{schmidt}
M.~Schmidt.
\newblock Classification and partial ordering of reductive {H}owe dual pairs of
  classical {L}ie groups.
\newblock {\em J. Geom. Phys.}, 29(4):283--318, 1999.

\bibitem{sepanski}
M.R. Sepanski.
\newblock {\em Compact Lie Groups}.
\newblock Graduate Texts in Mathematics. Springer New York, 2006.

\bibitem{tits-construction}
J.~Tits.
\newblock Alg{\'e}bres alternatives, alg{\'e}bres de {J}ordan et alg{\'e}bres
  de {L}ie exceptionnelles. {I}. {C}onstruction in {N}ederl. {A}kad.
  {W}etensch. {P}roc. {S}er. {A}.
\newblock \textit{Indag. Math.}, 28:223--237, 1966.

\end{thebibliography}
\bibliographystyle{plain}

\end{document}